\xapptocmd\normalsize{%
	\abovedisplayskip=12pt plus 3pt minus 9pt
	\abovedisplayshortskip=0pt plus 3pt
	\belowdisplayskip=12pt plus 3pt minus 9pt
	\belowdisplayshortskip=7pt plus 3pt minus 4pt
}{}{}
\theoremstyle{definition}
\newtheorem{definition}{Definition}[section]
\newtheorem{remark}[definition]{Remark}
\newtheorem{illustration}[definition]{Illustration}
\theoremstyle{plain}
\newtheorem{theorem}[definition]{Theorem}
\newtheorem{lemma}[definition]{Lemma}
\numberwithin{equation}{section}
\title[Estimates for initial coefficients of certain Bi--univalent functions]{Estimates for initial coefficients of \\ certain Bi--univalent functions}
\author[V. Madaan]{Vibha Madaan}
\address{Department of Mathematics, University of Delhi, Delhi--110 007, India}
\email{vibhamadaan47@gmail.com}
\author[A. Kumar]{Ajay Kumar}
\address{Department of Mathematics, University of Delhi, Delhi--110 007, India}
\email{ak7028581@gmail.com}
\author[V. Ravichandran]{V. Ravichandran}
\address{Department of Mathematics, National Institute of Technology, Tiruchirappalli--620015, India}
\email{vravi68@gmail.com, ravic@nitt.edu}
\keywords{Bi--univalent functions, Bi--starlike functions, Coefficient estimate, Subordination}
\subjclass[2010]{30C45; 30C80}
\thanks{The first author is supported by Senior Research Fellowship from University Grants Commission, New Delhi, Ref. No.:1069/(CSIR-UGC NET DEC, 2016).}
\begin{document}
\maketitle
\begin{abstract}
Estimates are obtained for the initial coefficients of a normalized analytic function $f$ in the unit disk $\mathbb{D}$ such that $f$ and the analytic extension of $f^{-1}$ to $\mathbb{D}$ belong to certain subclasses of univalent functions. The bounds obtained improve some existing known bounds.
\end{abstract}
\section{Introduction and Preliminaries}\label{intro}
Let $\mathcal{A}$ be the class of analytic functions defined on the unit disk $\mathbb{D}:=\{z:|z|<1\}$ of the form
\begin{equation}\label{expr f}
f(z)=z+\sum_{n=2}^\infty a_nz^n.
\end{equation}
Suppose that $\mathcal{S}$ is the subclass of $\mathcal{A}$ consisting of univalent functions. Being univalent, the functions in class $\mathcal{S}$ are invertible; however, the inverse need not be defined on the entire unit disk. The Koebe one-quarter theorem ensures that the image of the unit disk under every univalent function contains a disk of radius $1/4$. Thus, a function $f\in\mathcal{S}$ has an inverse defined on a disk containing disk $|z|<1/4$. It can be easily seen that  \begin{equation*}\label{inverse f}
f^{-1}(w)=w-a_2w^2+(2a_2^2-a_3)w^3-(5a_2^2-5a_2a_3+a_4)w^4+\cdots
\end{equation*} in some disk of radius at least $1/4$. A function $f\in\mathcal{A}$ is said to be bi--univalent in $\mathbb{D}$ if both $f$ and analytic extension of $f^{-1}$ to $\mathbb{D}$ are univalent in $\mathbb{D}$. The class of bi--univalent functions, denoted by $\sigma$, was introduced by Lewin \cite{MR0206255} in 1967, who also showed that the second coefficient of a bi--univalent function satisfies the inequality $|a_2|\leq 1.51$. Let $\sigma_1$ be the class of the functions $f=\phi\circ\psi^{-1}$, where $\phi$ and $\psi$ are univalent analytic functions mapping $\mathbb{D}$ onto a domain containing $\mathbb{D}$ and satisfy $\phi'(0)=\psi'(0)$. Clearly, $\sigma_1\subset \sigma$, though $\sigma_1\neq \sigma$ (see \cite{MR0604686}). In 1969, Suffridge \cite{MR0240297} gave a function in class $\sigma_1$ with $a_2=4/3$ and conjectured that $|a_2|\leq 4/3$ for the functions in the class $\sigma$. Netanyahu \cite{MR0235110} proved the conjecture for the subclass $\sigma_1$. The conjecture was later disproved by Styler and Wright \cite{MR0609659} in 1981, who showed that $a_2>4/3$ for some function in $\sigma$. Brannan and Clunie \cite{MR0623462} conjectured that $|a_2|\leq \sqrt{2}$ for a function $f\in\sigma$. Kedzierawski \cite{MR0997757} proved this for a special case when the functions $f$ and $f^{-1}$ are starlike functions.

For analytic functions $f$ and $g$ in $\mathbb{D}$, the function $f$ is \emph{subordinate} to the function $g$, written as $f(z)\prec g(z)$, if there is a Schwarz function $w$  such that $f=g\circ w$. If $g$ is univalent, then $f(z)\prec g(z)$ if and only if $f(0)=g(0)$ and $f(\mathbb{D})\subseteq g(\mathbb{D})$. The method of subordination is quite useful for establishing relations in terms of inequalities in the complex plane. Padmanabhan and Parvatham \cite{MR0819562} gave a unified representation of various classes of starlike and convex functions using convolution with the function $z/(1-z)^\alpha$, for $\alpha\in\mathbb{R}$. Later in 1989, for a convex function $h$ and a fixed function $g$, Shanmugam \cite{MR0994916} introduced a class $\mathcal{S}^*_g(h)$ which consists of functions $f\in\mathcal{A}$ satisfying $z(f*g)'(z)/(f*g)(z)\prec h(z)$. Further, if $g(z)=z/(1-z)$ and $h=\varphi$ is an analytic function with positive real part in $\mathbb{D}$ such that $\varphi(0)=1$, $\varphi'(0)>0$ and $\varphi(\mathbb{D})$ is symmetric about the real axis and starlike with respect to $1$, then the class $\mathcal{S}^*_g(h)$ reduces to the class $\mathcal{S}^*(\varphi)$ which was introduced by Ma and Minda \cite{MR1343506}. The growth, distortion and covering theorems for the class $\mathcal{S}^*(\varphi)$ are also proved in \cite{MR1343506}. For particular choices of $\varphi$, we have the following subclasses of the univalent functions. If $\varphi(z)=(1+Az)/(1+Bz)$, where $-1\leq B<A\leq 1$, then the class $\mathcal{S}^*(\varphi)$ is termed as the class of \emph{Janowski starlike functions} \cite{MR0267103}, denoted by $\mathcal{S}^*[A,B]$. For $0\leq \beta<1$, the class $\mathcal{S}^*[1-2\beta,-1]=:\mathcal{S}^*(\beta)$ is the class of \emph{starlike functions of order $\beta$} and for $\beta = 0$, the class $\mathcal{S}^*:=\mathcal{S}^*(0)$ is simply the class of \emph{starlike functions}. If $0<\alpha\leq 1$, then the class $\mathcal{SS}^* (\alpha) :=\mathcal{S}^* (((1+z)/(1-z))^\alpha)$ is the class of \emph{strongly starlike functions of order $\alpha$}. Similarly, the class $\mathcal{K}(\varphi)$ of convex functions consists of the univalent functions satisfying $1+zf''(z)/f'(z)\prec\varphi(z)$. Let $\mathcal{R}(\varphi)$ be the class of univalent functions satisfying $f'(z)\prec\varphi(z)$. For $b\in\mathbb{C}\setminus\{0\}$ and $p\in\mathbb{N}$, the classes $\mathcal{R}_{b,p}(\varphi)$ and $\mathcal{S}^*_{b,p}(\varphi)$ consist of the functions of the form $f(z)=z^p+\sum_{n=p+1}^\infty a_nz^n$ satisfying \[ 1+\frac{1}{b}\left(\frac{f'(z)}{pz^{p-1}}-1\right) \prec \varphi(z) \quad \text{and} \quad  1+\frac{1}{b} \left(\frac{1}{p} \frac{zf'(z)}{f(z)} -1\right) \prec \varphi(z),  \] respectively. Ali \emph{et al.\@} \cite{MR2323552} obtained Fekete-Szeg\"{o} inequalities and bound on the coefficient $a_{p+3}$ for the functions in these classes. On coefficient estimates for the functions that belong to certain subclasses of univalent functions, one can refer \cite{MR0232926, MR2020565}.

Analogous to the class of starlike (and convex) functions of order $\beta$ (with $0\leq \beta<1$), the class of \emph{bi--starlike} (and \emph{bi-convex}) \emph{functions of order $\beta$}, denoted by $\mathcal{S}^*_\sigma(\beta)$ (and $\mathcal{K}_\sigma(\beta)$), is the class of bi--univalent functions $f$ such that $f$ and analytic extension of $f^{-1}$ to $\mathbb{D}$ are both starlike (and convex) of order $\beta$ in $\mathbb{D}$. For $0<\alpha\leq 1$, a bi--univalent function $f$ is in class $\mathcal{S}^*_\sigma[\alpha]$ of \emph{strongly bi--starlike functions of order $\alpha$} if $f$ and analytic extension of $f^{-1}$ to $\mathbb{D}$ are strongly starlike functions of order $\alpha$ in $\mathbb{D}$. Brannan and Taha \cite{MR0951657} introduced these classes and gave bound on initial coefficients of the functions in these classes. Also, for a function $f\in\mathcal{K}_\sigma(0)$ given by \eqref{expr f}, they showed $|a_2|\leq 1$ and $|a_3|\leq 1$ with extremal function given by $z/(1-z)$ and its rotations. Particularly if $\beta=0$, then the class $\mathcal{S}^*_\sigma(\beta)$ reduces to the class of bi--starlike functions. Kedzierawski \cite{MR0997757} proved that for a bi--starlike function $f$ of the form \eqref{expr f}, $|a_2|\leq \sqrt{2}$. Further, \cite{MR3450961} and \cite{MR3178537} improved the estimates for coefficients $a_2$ and $a_3$ and also found estimates for the fourth coefficient for the functions in classes $\mathcal{S}^*_\sigma(\beta)$ and $\mathcal{S}^*_\sigma[\alpha]$. For coefficient estimates for the functions in some particular subclasses of bi--univalent functions, one may see \cite{MR2855984, MR2803711, SKR, MR3064526, MR2665593, MR2902367, MR2943990, MR3214422}.

Let the function $\varphi$ be an analytic function in $\mathbb{D}$ of the form  \begin{equation}\label{expr phi} \varphi(z)= 1+B_1z+B_2z^2+B_3z^3+\cdots, \end{equation} where $B_1>0$. For the function $\varphi$ and $\lambda\geq 0$, Kumar \emph{et al.\@} \cite{SKR} introduced the following subclass $\mathcal{R}_\sigma(\lambda,\varphi)$ of bi--univalent functions.
\begin{definition}\label{definition R}
Let $\lambda\geq 0$. A bi--univalent function $f$ given by \eqref{expr f} is in class $\mathcal{R}_\sigma(\lambda,\varphi)$, if it satisfies \[ (1-\lambda)\frac{f(z)}{z}+\lambda f'(z)\prec \varphi(z) \quad{}\text{and}\quad{} (1-\lambda)\frac{g(w)}{w}+\lambda g'(w)\prec \varphi(w), \] where $g$ denotes the univalent extension of $f^{-1}$ to the unit disk.
\end{definition}
With the particular values of $\lambda$ and $\varphi$, the class $\mathcal{R}_\sigma(\lambda,\varphi)$ reduces to many earlier classes as mentioned below:
\begin{enumerate}[(i)]
\item $\mathcal{R}_\sigma(\lambda,(1+(1-2\beta)z)/(1-z))=\mathcal{R}_\sigma (\lambda,\beta)\quad (\lambda\geq 1; 0\leq \beta<1)$ \cite[Definition 3.1]{MR2803711}
\item $\mathcal{R}_\sigma(\lambda,((1+z)/(1-z))^\alpha)=\mathcal{R}_{\sigma,\alpha} (\lambda)\quad (\lambda\geq 1; 0< \alpha\leq 1)$ \cite[Definition 2.1]{MR2803711}
\item $\mathcal{R}_\sigma(1,\varphi)=\mathcal{R}_\sigma(\varphi)$ \cite[p. 345]{MR2855984}.
\item $\mathcal{R}_\sigma(1,(1+(1-2\beta)z)/(1-z))=\mathcal{R}_\sigma (\beta)\quad (0\leq \beta<1)$ \cite[Definition 2]{MR2665593}.
\item $\mathcal{R}_\sigma(1,((1+z)/(1-z))^\alpha)=\mathcal{R}_{\sigma,\alpha}\quad (0< \alpha\leq 1)$ \cite[Definition 1]{MR2665593}
\end{enumerate}

The class of bi--starlike functions of Ma-Minda type was given by Ali \emph{et al.\@} \cite{MR2855984}.
\begin{definition}\label{defn bistarlike}
A function $f\in\sigma$ of the form \eqref{expr f}, is said to be in the class of \emph{Ma-Minda bi--starlike functions}, denoted by $\mathcal{S}^*_\sigma(\varphi)$, if the following subordinations hold: \[ \frac{zf'(z)}{f(z)}\prec \varphi(z) \quad\text{and}\quad \frac{wg'(w)}{g(w)}\prec\varphi(w),  \] where $g$ denotes the univalent extension of $f^{-1}$ to $\mathbb{D}$ and $\varphi$ is the function of the form \eqref{expr phi} satisfying the conditions as in the definition of the class  $\mathcal{S}^*(\varphi)$ as mentioned earlier.
\end{definition}
The class $\mathcal{S}^*_\sigma(\varphi)$ includes some well-known classes of the bi--univalent functions. For example:
\begin{enumerate}[(i)]
\item $\mathcal{S}^*_\sigma((1+(1-2\beta)z)/(1-z))=:\mathcal{S}^*_\sigma(\beta)$, $0\leq \beta<1$.
\item $\mathcal{S}^*_\sigma(((1+z)/(1-z))^\alpha)=:\mathcal{S}^*_\sigma[\alpha]$, $0<\alpha\leq 1$.
\end{enumerate}

Using the Fekete-Szeg\"{o} inequalities and principles of subordination, in this paper, the estimates for the coefficients $a_2$ and $a_3$ of the functions of the form \eqref{expr f} in the classes $\mathcal{R}_\sigma(\lambda,\varphi)$ and $\mathcal{S}^*_\sigma(\varphi)$ have been obtained. Moreover, the estimates so obtained are observed to be an improvement over the ones derived in \cite{MR2855984, Nisha, SKR}. For some particular choices of $\lambda$ and $\varphi$, the bounds determined are smaller than those mentioned in \cite{MR0951657, MR2803711, MR3450961, MR3178537, MR2665593} for the coefficients of the functions in the respective classes.

More precisely, the following theorem derives the estimates for the coefficients $a_2$ and $a_3$ for the functions given by \eqref{expr f} that belong to the class $\mathcal{R}_\sigma(\lambda,\varphi)$.
\begin{theorem}\label{bound theorem}
Let $\varphi$ be an analytic function given by the series \eqref{expr phi} such that $B_2\in\mathbb{R}$. For $\lambda\geq 0$, let the function $f\in\mathcal{R}_\sigma (\lambda,\varphi)$ and $\tau:=(1+\lambda)^2/(1+2\lambda)$.
\begin{enumerate}[(a)]
\item\label{Ra} If $\tau B_2\leq B_1^2$, then \[ |a_2|\leq \frac{B_1 \sqrt{B_1}}{ \sqrt{(1+2\lambda) (B_1^2-\tau B_2+\tau B_1)}} \quad \text{and}\quad |a_3|\leq \frac{B_1}{1+2\lambda}\max\left\{\frac{B_1^2}{B_1^2-\tau B_2+\tau B_1}, 1  \right\}.  \]
\item\label{Rb} If $\tau B_2\geq B_1^2$, then \[ |a_2|\leq \frac{B_1\sqrt{B_1}}{\sqrt{(1+2\lambda)(\tau B_2+\tau B_1-B_1^2)}} \quad \text{and} \quad |a_3|\leq \frac{B_1}{1+2\lambda}\max\left\{\frac{B_1^2}{\tau B_2+\tau B_1-B_1^2}, 1  \right\}.  \]
\end{enumerate}
\end{theorem}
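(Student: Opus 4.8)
The plan is to convert both subordinations into power--series identities through Schwarz functions, compare the first two coefficients, and extract the estimates from the sharp Schwarz--coefficient inequalities $|c_1|\le 1$ and $|c_2|\le 1-|c_1|^2$. Write $f(z)=z+a_2z^2+a_3z^3+\cdots$ and $g=f^{-1}$, so $g(w)=w-a_2w^2+(2a_2^2-a_3)w^3+\cdots$. By the definition of $\mathcal R_\sigma(\lambda,\varphi)$ there are Schwarz functions $u(z)=c_1z+c_2z^2+\cdots$, $v(w)=d_1w+d_2w^2+\cdots$ with $(1-\lambda)f(z)/z+\lambda f'(z)=\varphi(u(z))$ and $(1-\lambda)g(w)/w+\lambda g'(w)=\varphi(v(w))$. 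Expanding and comparing coefficients of $z,z^2$ and of $w,w^2$, with $\varphi(t)=1+B_1t+B_2t^2+\cdots$, gives
\[(1+\lambda)a_2=B_1c_1=-B_1d_1,\qquad (1+2\lambda)a_3=B_1c_2+B_2c_1^2,\qquad (1+2\lambda)(2a_2^2-a_3)=B_1d_2+B_2d_1^2.\]
Hence $d_1=-c_1$, $a_2=B_1c_1/(1+\lambda)$; adding the last two (and using $d_1^2=c_1^2=(1+\lambda)^2a_2^2/B_1^2$, $\tau=(1+\lambda)^2/(1+2\lambda)$) yields
\[2(1+2\lambda)a_2^2=B_1(c_2+d_2)+\frac{2B_2(1+\lambda)^2}{B_1^2}\,a_2^2\quad\Longleftrightarrow\quad c_2+d_2=\frac{2(1+2\lambda)(B_1^2-\tau B_2)}{B_1^3}\,a_2^2,\]
while subtracting them gives $a_3=a_2^2+B_1(c_2-d_2)/[2(1+2\lambda)]$.

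\emph{The bound on $a_2$.} Substituting $c_1^2$ for $a_2^2$ and applying $|c_2|,|d_2|\le1-|c_1|^2$ in the first identity, the triangle inequality gives $\tfrac{2(1+2\lambda)|B_1^2-\tau B_2|}{B_1^3}|a_2|^2\le 2-\tfrac{2(1+\lambda)^2}{B_1^2}|a_2|^2$, i.e.\ after solving for $|a_2|^2$ and using $(1+\lambda)^2=(1+2\lambda)\tau$,
\[|a_2|^2\le\frac{B_1^3}{(1+2\lambda)\bigl(|B_1^2-\tau B_2|+\tau B_1\bigr)} .\]
Dropping the modulus according to the sign of $B_1^2-\tau B_2$ yields the two stated formulas (the borderline $B_1^2=\tau B_2$ being their common value); the same estimate also produces the auxiliary bound $|c_1|^2\le\tau B_1/(|B_1^2-\tau B_2|+\tau B_1)$.

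\emph{The bound on $a_3$.} One first observes that the claimed bound equals $\max\{M,\,B_1/(1+2\lambda)\}$, with $M$ the bound for $|a_2|^2$ just obtained. To prove it, use the family of representations
\[a_3=\frac{B_1}{1+2\lambda}\Bigl[\alpha c_2-(1-\alpha)d_2+\kappa(\alpha)\,c_1^2\Bigr],\qquad \kappa(\alpha)=\frac{(2\alpha-1)B_2}{B_1}+\frac{2(1-\alpha)B_1}{\tau}\qquad(0\le\alpha\le1),\]
obtained as convex combinations of $(1+2\lambda)a_3=B_1c_2+B_2c_1^2$ and its reflection $(1+2\lambda)(2a_2^2-a_3)=B_1d_2+B_2c_1^2$ (after eliminating $a_2^2$ via $2a_2^2=2B_1^2c_1^2/(1+\lambda)^2$). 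Since $\alpha|c_2|+(1-\alpha)|d_2|\le1-|c_1|^2$, this gives $|a_3|\le\tfrac{B_1}{1+2\lambda}\bigl[1-(1-|\kappa(\alpha)|)|c_1|^2\bigr]$. Now choose $\alpha$: if some $\alpha\in[0,1]$ makes $|\kappa(\alpha)|\le1$ then $|a_3|\le B_1/(1+2\lambda)$; otherwise the bracket is increasing in $|c_1|^2$, so feed in $|c_1|^2\le\tau B_1/(|B_1^2-\tau B_2|+\tau B_1)$ (taking $\alpha=1$ in case~(a) and $\alpha=0$ in case~(b), which in the ``otherwise'' regime makes $\kappa$ real and $\ge1$). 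A direct computation then collapses the remaining cases to the factor $\max\{B_1^2/(|B_1^2-\tau B_2|+\tau B_1),\,1\}$ and yields both assertions.

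\emph{Expected main obstacle.} The delicate step is the choice of $\alpha$ in the $a_3$ estimate: a bare triangle inequality on $(1+2\lambda)a_3=B_1c_2+B_2c_1^2$ overshoots the claimed value once $B_2<0$ (and in part of case~(b)), because one must exploit real cancellation inside $B_1c_2+B_2c_1^2$; this is precisely what the reflected identity / Fekete--Szeg\"o functional $a_3-\mu a_2^2$ achieves, in combination with the exact coupling between $c_1^2$ and $c_2+d_2$. Verifying that every sign regime folds into the single expression $\max\{B_1^2/(|B_1^2-\tau B_2|+\tau B_1),\,1\}$ is the only real bookkeeping; the algebra itself is routine.
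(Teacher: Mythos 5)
Your proposal is correct, and it reaches the stated bounds by a genuinely different route from the paper. The paper converts the Schwarz functions into functions $p,q$ of positive real part, derives the Fekete--Szeg\H{o}-type inequalities $|a_3-xa_2^2|\le \frac{B_1}{1+2\lambda}G(x)$ and $|a_3-(2-y)a_2^2|\le\frac{B_1}{1+2\lambda}G(y)$ for \emph{free} real parameters $x,y$ using Ali's lemma $|p_2-(\nu/2)p_1^2|\le\max\{2,2|\nu-1|\}$, and then reduces both coefficient estimates to computing $\inf_{x,y}\frac{G(x)+G(y)}{|2-x-y|}$ and $\inf_{x,y}\frac{|2-y|G(x)+|x|G(y)}{|2-x-y|}$; those infima are the content of its Lemmas 2.1 and 2.2, each a lengthy piecewise case analysis. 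You instead stay with the Schwarz coefficients, use the sharp inequality $|c_2|\le 1-|c_1|^2$ (which retains the $|c_1|$-dependence that the $\max$-type bound discards), exploit the exact coupling $c_2+d_2=\frac{2(1+2\lambda)(B_1^2-\tau B_2)}{B_1^3}a_2^2$ to get $|a_2|^2\le B_1^3/\bigl((1+2\lambda)(|B_1^2-\tau B_2|+\tau B_1)\bigr)$ in one line, and handle $a_3$ with a one-parameter convex combination whose optimal $\alpha$ sits at an endpoint. I checked the key computations: the identity for $\kappa(\alpha)$ is right, the ``otherwise'' regime is exactly $\min\{\kappa(0),\kappa(1)\}>1$ (the range of $\kappa$ cannot lie below $-1$ since $\kappa(1/2)=B_1/\tau>0$), and substituting $|c_1|^2\le\tau B_1/(|B_1^2-\tau B_2|+\tau B_1)$ with $\alpha=1$ (case (a)) or $\alpha=0$ (case (b)) collapses to $B_1^2/(|B_1^2-\tau B_2|+\tau B_1)$, matching the theorem. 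What each approach buys: yours is substantially shorter and makes the cancellation mechanism transparent; the paper's is more mechanical and its lemmas are reused verbatim for Theorem 1.3 on $\mathcal{S}^*_\sigma(\varphi)$ (only the weights in the numerator change), whereas your argument would need the analogous convex-combination bookkeeping redone there.
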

\begin{remark}\label{remark R}
Theorem \ref{bound theorem} is an improvement over the coefficient estimates obtained in \cite{MR2855984,MR2803711,SKR,MR2665593}. For an analytic function $\varphi$ of the form \eqref{expr phi}, Kumar \emph{et al.\@} \cite[Theorem 2.2]{SKR} obtained a bound on the coefficient $a_2$ of the function $f\in\mathcal{R}_\sigma(\lambda,\varphi)$ for $\lambda\geq 0$. In addition, if $B_2\in\mathbb{R}$, by the means of the following comparisons, it may be noted that  Theorem \ref{bound theorem} gives an estimate for $a_2$ which is smaller than the one given by \cite[Theorem 2.2]{SKR}. We can see that if $\tau B_2\leq B_1^2$ and $B_2\leq B_1$, then  \[ \frac{2B_1-B_2}{B_1}- \frac{B_1^2}{B_1^2-\tau B_2+\tau B_1}= \frac{(B_1-B_2)(2\tau B_1+B_1^2-\tau B_2)}{B_1(B_1^2-\tau B_2+\tau B_1)} \geq 0. \]
Therefore, $\min\left\{\frac{2B_1-B_2}{B_1}, \frac{B_1^2}{B_1^2-\tau B_2+\tau B_1}\right\}= \frac{B_1^2}{B_1^2-\tau B_2+\tau B_1}$ which implies that the estimate obtained for $a_2$ using Theorem \ref{bound theorem} for this case is less than $\sqrt{(2B_1-B_2)/(1+2\lambda)}$. Similarly, if $\tau B_2\leq B_1^2$ and $B_2\geq B_1$, then \[ \frac{B_2}{B_1}-\frac{B_1^2}{B_1^2 -\tau B_2+\tau B_1} =\frac{(B_1^2 -\tau B_2)(B_2-B_1)}{B_1(B_1^2-\tau B_2+\tau B_1)} \geq 0.\]
Next, the case when the conditions $\tau B_2\geq B_1^2$ and $B_2\leq B_1$ hold, it follows that \[ \frac{2B_1-B_2}{B_1}-\frac{B_1^2}{\tau B_2+\tau B_1-B_1^2}\geq  \frac{B_1(\tau -B_1)^2}{\tau (\tau B_2+\tau B_1-B_1^2)}\geq 0\] and further, if $\tau B_2\geq B_1^2$ and $B_2\geq B_1$, then the inequality \[  \frac{B_2}{B_1}-\frac{B_1^2}{\tau B_2+\tau B_1-B_1^2}=\frac{(\tau B_2-B_1^2)(B_2+B_1)}{B_1(\tau B_2+\tau B_1-B_1^2)}\geq 0\] holds.
	
Now let us consider the class $\mathcal{R}_{\sigma,\alpha}(\lambda):=\mathcal{R}_\sigma(\lambda,((1+z)/(1-z))^\alpha)$ for $0<\alpha\leq 1$.  Clearly, $B_1=2\alpha$ and $B_2=2\alpha^2$. For a function $f$ given by \eqref{expr f} in the class $\mathcal{R}_{\sigma,\alpha}(\lambda)$, Theorem \ref{bound theorem} yields \[ |a_2|\leq\begin{cases}
\frac{2\alpha}{\sqrt{(1+\lambda)^2+\alpha(1-\lambda^2+2\lambda)}} &\quad\text{if}\quad 1\leq \lambda \leq 1+\sqrt{2}\\
\frac{2\alpha}{\sqrt{(1+\lambda)^2-\alpha(1-\lambda^2+2\lambda)}} &\quad\text{if}\quad \lambda\geq 1+\sqrt{2}.
\end{cases}   \] It can be verified that if $1\leq \lambda\leq 1+\sqrt{2}$, then the bound derived for  $a_2$ coincides with that obtained by Frasin and Aouf \cite[Theorem 2.2]{MR2803711}, whereas the estimate obtained for $a_2$ for the part $\lambda\geq 1+\sqrt{2}$ is smaller than that in \cite[Theorem 2.2]{MR2803711}. Likewise, using Theorem \ref{bound theorem}, we can see that  $|a_3|\leq 2\alpha/(1+2\lambda)$ which is less than the bound for $a_3$ derived in \cite[Theorem 2.2]{MR2803711}.
	
Similarly, let $\varphi(z)=(1+(1-2\beta)z)/(1-z)$ for $0\leq \beta<1$. As a result of Theorem \ref{bound theorem}, the functions in the class $\mathcal{R}_\sigma(\lambda,\beta)$ satisfy  \[ |a_2|\leq \begin{cases}
\sqrt{\frac{2(1-\beta)}{1+2\lambda}} &\quad\text{if}\quad 0\leq \beta\leq \frac{1-\lambda^2+2\lambda}{2(1+2\lambda)}\\
(1-\beta)\sqrt{\frac{2}{\lambda^2+\beta(1+2\lambda)}} &\quad\text{if}\quad \frac{1-\lambda^2+2\lambda}{2(1+2\lambda)}\leq \beta<1
\end{cases}  \] and $|a_3|\leq 2(1-\beta)/(1+2\lambda)$. Again, the estimate for $a_2$ so determined for the part when $0\leq\beta\leq (1-\lambda^2+2\lambda)/(2(1+2\lambda))$ is same as that obtained by Frasin and Aouf \cite[Theorem 3.2]{MR2803711}. For $(1-\lambda^2+2\lambda)/(2(1+2\lambda)) \leq \beta< 1$, the estimate for $a_2$, derived using Theorem \ref{bound theorem}, is refined in comparison with \cite[Theorem 3.2]{MR2803711}. The estimate for the coefficient $a_3$ obtained using Theorem \ref{bound theorem} is smaller than the one in \cite[Theorem 3.2]{MR2803711}. Moreover, the coefficient estimates derived above for the functions in classes $\mathcal{R}_{\sigma,\alpha} (\lambda)$  and $\mathcal{R}_\sigma(\lambda,\beta)$ are valid for $\lambda \geq 0$.

Also, Ali \emph{et al.\@} \cite[Theorem 2.1]{MR2855984} derived bound on the coefficients $a_2$ and $a_3$ of a function $f \in \mathcal{R}_\sigma (\varphi)$ of the form \eqref{expr f}. It may be noted that the estimates for the coefficients $a_2$ and $a_3$ of the function $f\in\mathcal{R}_\sigma(\varphi)$ given using Theorem \ref{bound theorem} improve the estimates given in \cite[Theorem 2.1]{MR2855984} provided $\varphi''(0)\in\mathbb{R}$.

Furthermore, the coefficient estimates for the functions in the classes $\mathcal{R}_{\sigma,\alpha}$ and $\mathcal{R}_\sigma(\beta)$ determined in \cite[Theorem 1]{MR2665593} and \cite[Theorem 2]{MR2665593}, respectively are particular cases for the above-mentioned estimates.
\end{remark}
The next theorem determines the estimates for the initial coefficients for a function in the class $\mathcal{S}^*_\sigma(\varphi)$.
\begin{theorem}\label{bound theorem starlike}
Let $f\in\mathcal{S}^*_\sigma(\varphi)$, where $\varphi''(0)\in\mathbb{R}$.
\begin{enumerate}[(a)]
\item\label{Sa} If $B_2\leq B_1^2$, then \[ |a_2|\leq \frac{B_1\sqrt{B_1}}{\sqrt{B_1^2+B_1-B_2}} \quad \text{and}\quad |a_3|\leq \max\left\{ \frac{B_1^3}{B_1^2-B_2+B_1}, \frac{B_1}{2}  \right\}. \]
\item\label{Sb} If $B_2\leq B_1^2$, then \[ |a_2|\leq\frac{B_1\sqrt{B_1}}{\sqrt{B_2+B_1-B_1^2}}\quad \text{and} \quad |a_3|\leq \max\left\{ \frac{B_1^3}{B_2+B_1-B_1^2}, \frac{B_1}{2}  \right\}. \]
\end{enumerate}
\end{theorem}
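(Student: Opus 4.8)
I would prove this in parallel with Theorem~\ref{bound theorem}, replacing the functional $(1-\lambda)f(z)/z+\lambda f'(z)$ by $zf'(z)/f(z)$. Let $g$ denote the analytic extension of $f^{-1}$. Since $f\in\mathcal{S}^*_\sigma(\varphi)$, there exist Schwarz functions $u(z)=c_1z+c_2z^2+\cdots$ and $v(w)=d_1w+d_2w^2+\cdots$ with $zf'(z)/f(z)=\varphi(u(z))$ and $wg'(w)/g(w)=\varphi(v(w))$. First I would record the expansions
\[ \frac{zf'(z)}{f(z)}=1+a_2z+(2a_3-a_2^2)z^2+\cdots, \qquad \frac{wg'(w)}{g(w)}=1-a_2w+(3a_2^2-2a_3)w^2+\cdots, \]
using $g(w)=w-a_2w^2+(2a_2^2-a_3)w^3+\cdots$, and compare them with $\varphi(u(z))=1+B_1c_1z+(B_1c_2+B_2c_1^2)z^2+\cdots$ and the analogous expansion of $\varphi(v(w))$. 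This yields $a_2=B_1c_1=-B_1d_1$ (so $d_1=-c_1$ and $c_1^2=a_2^2/B_1^2$), together with $2a_3-a_2^2=B_1c_2+B_2c_1^2$ and $3a_2^2-2a_3=B_1d_2+B_2d_1^2$.

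Adding the last two identities gives $2a_2^2(B_1^2-B_2)=B_1^3(c_2+d_2)$, the degenerate case $B_2=B_1^2$ being disposed of directly from $a_2=B_1c_1$, $|c_1|\le1$. For $|a_2|$, I would use the Schwarz-coefficient bounds $|c_2|\le1-|c_1|^2$, $|d_2|\le1-|d_1|^2$ to get $|c_2+d_2|\le2-2|c_1|^2=2-2|a_2|^2/B_1^2$, substitute, and solve the resulting inequality $|a_2|^2\bigl(|B_1^2-B_2|+B_1\bigr)\le B_1^3$, obtaining $|a_2|^2\le B_1^3/(|B_1^2-B_2|+B_1)$; writing $|B_1^2-B_2|$ as $B_1^2-B_2$ or as $B_2-B_1^2$ gives the two displayed forms.

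For $|a_3|$ I would use one of the two coefficient relations for $a_3$ (recalling $a_2^2=B_1^2c_1^2$), namely $2a_3=(B_1^2+B_2)c_1^2+B_1c_2$ in case~(a) and $2a_3=(3B_1^2-B_2)c_1^2-B_1d_2$ in case~(b), take moduli with $|c_2|,|d_2|\le1-|c_1|^2=1-|a_2|^2/B_1^2$, and arrive at an estimate of the shape $2|a_3|\le B_1+(|a_2|^2/B_1^2)(M-B_1)$, with $M=|B_1^2+B_2|$ in case~(a) and $M=|3B_1^2-B_2|$ in case~(b). Since the right-hand side is affine in $|a_2|^2$, which by the previous step runs over $[0,\,B_1^3/(|B_1^2-B_2|+B_1)]$, its maximum is attained at an endpoint: the left endpoint gives $|a_3|\le B_1/2$, and the right endpoint gives $|a_3|\le B_1^3/(|B_1^2-B_2|+B_1)$ after simplification. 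Taking the larger of the two yields $\max\{B_1^3/(B_1^2-B_2+B_1),\,B_1/2\}$ in case~(a) and $\max\{B_1^3/(B_2-B_1^2+B_1),\,B_1/2\}$ in case~(b).

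The algebra is routine; the only delicate point is the $a_3$ bound, where one must keep track of the signs of $M-B_1$ and of $B_1^2\pm B_2$ to decide at which endpoint of the $|a_2|^2$-interval the affine majorant is maximized, and correspondingly which of the two $a_3$-relations to feed it into. If one moreover wants the constant to be optimal in every sub-case, the remaining configuration to examine is $|c_2|=1-|c_1|^2$ and $|d_2|=1-|d_1|^2$; then the identity $c_2+d_2=2c_1^2(B_1^2-B_2)/B_1$ forces $c_2-d_2$ to be orthogonal to $c_1^2$, the two terms building $a_3=a_2^2+\tfrac{B_1}{4}(c_2-d_2)$ combine in quadrature, and a short computation confirms that this never exceeds the bound just obtained.
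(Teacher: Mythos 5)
Your treatment of $|a_2|$ is correct, and it is in fact a more direct route than the paper's: adding the two second-coefficient identities and invoking $|c_2|\le 1-|c_1|^2$, $|d_2|\le 1-|d_1|^2$ gives $|a_2|^2\bigl(|B_1^2-B_2|+B_1\bigr)\le B_1^3$ in one line, whereas the paper funnels everything through Fekete--Szeg\H{o}-type functionals $|a_3-\mu a_2^2|$ and the separate minimization Lemma~\ref{lemma a2}; the constants agree.

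The $|a_3|$ step, however, has a genuine gap. Using a \emph{single} relation, say $2a_3=(B_1^2+B_2)c_1^2+B_1c_2$, and bounding the two terms separately gives $2|a_3|\le B_1+t\bigl(|B_1^2+B_2|-B_1\bigr)$ with $t=|c_1|^2\le t^*:=B_1/(|B_1^2-B_2|+B_1)$; the right-endpoint value simplifies to $B_1^3/(B_1^2-B_2+B_1)$ only when $B_1^2+B_2\ge B_1$. If instead $B_2<-B_1^2-B_1$ (still within case~(a)), the method yields $|a_3|\le B_1|B_2|/(B_1^2-B_2+B_1)$, which strictly exceeds the asserted bound $\max\{B_1^3/(B_1^2-B_2+B_1),B_1/2\}=B_1/2$: for $B_1=1$, $B_2=-10$ one gets $t^*=1/12$ and $2|a_3|\le 1+\tfrac{1}{12}(9-1)=\tfrac53$, i.e.\ $|a_3|\le 5/6$ rather than $1/2$; the symmetric failure occurs in case~(b) when $B_2>3B_1^2+B_1$. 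The culprit is that bounding $|(B_1^2+B_2)c_1^2|$ and $|B_1c_2|$ separately discards the cancellation between $c_2$ and $c_1^2$. The repair stays inside your framework but requires mixing the two relations: from $2a_3=\bigl[\lambda(B_1^2+B_2)+(1-\lambda)(3B_1^2-B_2)\bigr]c_1^2+B_1\bigl(\lambda c_2-(1-\lambda)d_2\bigr)$ one chooses $\lambda\in[0,1]$ to annihilate or shrink the $c_1^2$-coefficient (e.g.\ $\lambda=(3B_1^2-B_2)/(2(B_1^2-B_2))$ when $B_2\le-B_1^2$, giving $2|a_3|\le B_1$ at once). This optimal weighting over a two-parameter family is precisely what the paper's Lemma~\ref{lemma starlike}, the infimum of $\bigl(|3-y|G(x)+|x+1|G(y)\bigr)/|2-x-y|$, encodes; restricting to ``one of the two coefficient relations'' is not enough. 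Finally, the closing sharpness remark is unsound: knowing $c_2+d_2=2c_1^2(B_1^2-B_2)/B_1$ places no constraint at all on the direction of $c_2-d_2$, so the claimed ``orthogonality'' does not follow.
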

\begin{remark}
Bohra \emph{et al.\@} \cite[Corollary 2.3]{Nisha} and Ali \emph{et al.\@} \cite[Corollary 2.1]{MR2855984} gave estimates on the coefficients $a_2$ and $a_3$ of the functions in the class $\mathcal{S}^*_\sigma(\varphi)$. In addition, let us assume that $B_2\in\mathbb{R}$. By means of inequalities similar to those in Remark \ref{remark R}, we can see that the estimates for the coefficients $a_2$ and $a_3$ of a function in the class $\mathcal{S}^*_\sigma(\varphi)$, obtained using Theorem \ref{bound theorem starlike}, improve those derived in the above references.

Particularly if $\varphi(z)=((1+z)/(1-z))^\alpha$, $(0<\alpha\leq 1)$, the Theorem \ref{bound theorem starlike} readily yields that for a function $f\in\mathcal{S}^*_\sigma[\alpha]$ of the form \eqref{expr f}, we have $|a_2|\leq 2\alpha/(\sqrt{\alpha+1})$, while \[ |a_3|\leq \alpha \quad \text{if}\quad 0<\alpha\leq 1/3, \quad \text{and} \quad |a_3|\leq 4\alpha^2/(\alpha+1) \quad \text{if}\quad 1/3\leq \alpha\leq 1 \] which coincides with the estimates for $a_3$ as mentioned in \cite[Theorem 2.1]{MR3178537}. For a function $f\in\mathcal{S}^*_\sigma(\beta)$ $(0\leq \beta<1)$, a bi--starlike function of order $\beta$, using Theorem \ref{bound theorem starlike}, we may solve to get \[ |a_2|\leq \sqrt{2(1-\beta)} \quad \text{if} \quad 0\leq \beta\leq 1/2, \quad \text{whereas}\quad |a_2|\leq (1-\beta)\sqrt{2/\beta} \quad \text{if}\quad 1/2\leq \beta<1. \]  Further, \[ |a_3|\leq \begin{cases}
2(1-\beta) &\quad \text{if} \quad 0\leq \beta\leq 1/2\\
2(1-\beta)^2/\beta &\quad \text{if}\quad 1/2\leq \beta \leq 2/3\\
1-\beta &\quad \text{if}\quad 2/3\leq \beta<1.
\end{cases}  \] The bounds for $a_2$ and $a_3$ obtained above are smaller than those given by \cite{MR3450961}. Also it can be seen that the bounds obtained as a result of Theorem \ref{bound theorem starlike} are an improvement over the ones given by Brannan and Taha \cite{MR0951657}.
\end{remark}
\section{Proofs of the main results}
We now prove following lemmas which are useful to prove the main result.
\begin{lemma}\label{lemma a2}
Let $\xi\in\mathbb{R}$, $\eta>0$. Let the function $G\colon\mathbb{R}\to[0,\infty)$ be defined by \[ G(x):=\max\{1,|\eta x-\xi| \}.  \] Then \[ \inf_{x,y\in\mathbb{R}}\frac{G(x)+G(y)}{|2-x-y|}=\begin{cases}
\frac{1}{1-\gamma} &\quad\text{if}\quad\xi\leq \eta\\
\frac{1}{\rho-1} &\quad\text{if}\quad\xi\geq \eta,
\end{cases}   \] where $\gamma:=(\xi-1)/\eta$ and $\rho:=(\xi+1)/\eta$.
\end{lemma}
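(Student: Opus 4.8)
The plan is to collapse the two–variable infimum onto a single pointwise estimate for $G$. First I would treat the case $\xi\le\eta$, where $\gamma=(\xi-1)/\eta<1$ and hence $C:=1-\gamma=(\eta-\xi+1)/\eta>0$. The central claim to establish is the pointwise inequality $C\,G(x)\ge|1-x|$ for \emph{every} $x\in\mathbb{R}$. Granting this, the triangle inequality gives, for all $x,y$ with $x+y\ne2$ (the case $x+y=2$ making the quotient infinite and so irrelevant to the infimum),
\[
|2-x-y|\le|1-x|+|1-y|\le C\bigl(G(x)+G(y)\bigr),
\]
so that $(G(x)+G(y))/|2-x-y|\ge 1/(1-\gamma)$, and it remains only to exhibit a pair attaining this. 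The natural candidate is $x=y=\gamma$: since $\eta\gamma-\xi=-1$ we get $G(\gamma)=\max\{1,1\}=1$, and $\gamma\ne1$, so the ratio there equals $2/(2|1-\gamma|)=1/(1-\gamma)$. This settles the case $\xi\le\eta$.

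To prove the pointwise inequality I would exploit that $\eta-\xi\ge0$, i.e.\ $|\eta-\xi|=\eta-\xi$. Writing $\eta|1-x|=|(\eta-\xi)-(\eta x-\xi)|$ and applying the triangle inequality, $\eta|1-x|\le(\eta-\xi)+|\eta x-\xi|$; then bounding the first summand by $(\eta-\xi)G(x)$ (using $G(x)\ge1$) and the second by $G(x)$ (using $G(x)\ge|\eta x-\xi|$) yields $\eta|1-x|\le(\eta-\xi+1)G(x)=\eta C\,G(x)$, and dividing by $\eta>0$ finishes it. I expect this two–line estimate, and in particular spotting the decomposition $\eta-\eta x=(\eta-\xi)-(\eta x-\xi)$, to be the main (and essentially the only) point of the argument; notably it avoids any case split on whether $x$ lies inside or outside the interval $[\gamma,\rho]$ on which $G\equiv1$.

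Finally, the case $\xi\ge\eta$ would be handled as the mirror image. Here $\rho=(\xi+1)/\eta>1$ and $C':=\rho-1=(\xi-\eta+1)/\eta>0$, and the same manipulation with the roles reversed, $\eta|1-x|=|(\eta x-\xi)-(\eta-\xi)|\le|\eta x-\xi|+(\xi-\eta)\le G(x)+(\xi-\eta)G(x)=\eta C'\,G(x)$, gives $C'\,G(x)\ge|1-x|$, whence $(G(x)+G(y))/|2-x-y|\ge1/(\rho-1)$, with equality at $x=y=\rho$ (where $\eta\rho-\xi=1$ forces $G(\rho)=1$ and $\rho\ne1$). The two formulas agree on the overlap $\xi=\eta$, both reducing to $\eta$, so there is no consistency issue. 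The only routine verifications are the strict positivity of $C$ and $C'$ (immediate from $\xi\le\eta$, respectively $\xi\ge\eta$) and the evaluations $G(\gamma)=G(\rho)=1$ together with $\gamma\ne1$, $\rho\ne1$; beyond the elementary pointwise bound there is no real obstacle.
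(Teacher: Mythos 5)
Your proposal is correct, and it proves the lemma by a genuinely different and much shorter route than the paper. The paper's proof partitions $\mathbb{R}^2$ into six regions according to the position of $x$ and $y$ relative to $[\gamma,\rho]$, computes the infimum of $H(x,y)=(G(x)+G(y))/|2-x-y|$ on each region (with further subcases and limits as $x$ or $y$ tends to $\pm\infty$), and then takes the least of all these values. You instead establish the single pointwise bound $|1-x|\le(1-\gamma)\,G(x)$ for $\xi\le\eta$ (and $|1-x|\le(\rho-1)\,G(x)$ for $\xi\ge\eta$) via the decomposition $\eta(1-x)=(\eta-\xi)-(\eta x-\xi)$ together with $G(x)\ge 1$ and $G(x)\ge|\eta x-\xi|$; the triangle inequality $|2-x-y|\le|1-x|+|1-y|$ then gives the global lower bound $1/(1-\gamma)$ (resp. $1/(\rho-1)$) at once, and the choice $x=y=\gamma$ (resp. $x=y=\rho$), where $G=1$ and $\gamma<1<\rho$ strictly, shows it is attained. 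I checked the pointwise estimate and the positivity of $1-\gamma$ and $\rho-1$ in the respective cases; everything holds, and the two formulas agree (both equal $\eta$) when $\xi=\eta$. Your argument buys considerable economy and safety: it avoids the delicate bookkeeping of boundary minima versus limits at infinity that the paper must carry through each subcase, and it shows the infimum is in fact a minimum. The only thing the paper's exhaustive analysis offers beyond this is the catalogue of region-by-region infima, which is reused as a template for Lemma 2.2; your method would need a separate (though analogous) pointwise inequality to handle that weighted variant.
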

\begin{proof}
The function $G(x)$ can be simplified to \[ G(x)=\begin{cases}
\xi-\eta x &\quad \text{if}\quad x\leq \gamma\\
1 &\quad\text{if} \quad \gamma \leq x\leq \rho\\
\eta x-\xi &\quad \text{if} \quad x \geq \rho.
\end{cases}  \] Let $H$ be a function on $\mathbb{R}^2 \setminus \{(x,y): x+y=2\}$ defined by   \[ H(x,y):=\frac{G(x)+G(y)}{|2-x-y|}.    \] It may be noted that $\lim_{x+y\rightarrow 2}H(x,y)=\infty$. Being a non-negative real-valued function, $H$ has a non-negative infimum in $\mathbb{R}$. To find the infimum of the function $H$, we consider the possibilities as $x$ and $y$ vary in the domain of definition. Since $H$ is a symmetric function in the variables $x$ and $y$, the infimum of the function $H$ if $\gamma\leq x\leq \rho$ and $y\leq \gamma$ coincides with that whenever $x\leq \gamma$ and $\gamma\leq y\leq \rho$. Similarly, the conditions $x\geq \rho$, $y\leq \gamma$, and $\gamma\leq x\leq \rho$, $y\geq \rho$ result into the same infimum of the function $H$ as given by $x\leq \gamma$, $y\geq\rho$, and $x\geq \rho$, $\gamma\leq y\leq \rho$, respectively. Therefore, the process of determining the infimum of the function $H$ reduces to finding infimum of $H$ in the following cases:

\textbf{Case 1: } $x,y\leq \gamma$. \\
The function $H$ becomes \[ H(x,y)=\frac{1}{|2-x-y|}(2\xi-(x+y)\eta).  \] This case may be divided into three subcases \emph{viz.\@} $\xi\leq \eta$, $\eta < \xi\leq \eta+1$ and $\xi> \eta+1$. If $\xi\leq \eta$, it is clear that $\gamma<1$. Hence, the function $H$ reduces to \[ H(x,y)= \frac{1}{2-x-y}(2\xi-(x+y)\eta).  \]  The function $H$ has no critical points in the set $(-\infty,\gamma)\times(-\infty,\gamma)$ which states that $H$ does not attain its minimum in its set. Thus, the minimum of the function $H$, if it exists, is attained on the boundary; however, $H$ may have its infimum as $x$ or $y$ approach $-\infty$ or both. Along the line $y=\gamma$, it is easy to see that the function $H(x,\gamma)$ is decreasing in $x$ and $\min H(x,\gamma)=H(\gamma, \gamma)= 1/(1-\gamma)$. Similarly, $\min H(\gamma,y)= 1/(1-\gamma)$. At the same time \[ \lim_{x\rightarrow-\infty} H(x,y) =\lim_{y\rightarrow-\infty} H(x,y)=\eta.\] For $\xi\leq \eta$, since $\min\{ \eta, 1/(1-\gamma) \}=1/(1-\gamma)$, we conclude that \[\inf H(x,y)=\min H(x,y)= H(\gamma, \gamma)=1/(1-\gamma). \]

Let us now assume that $\eta< \xi\leq \eta+1$ which implies $\gamma\leq 1$. Being increasing functions in $x$ and $y$, $H(x,\gamma)$ and $H(\gamma,y)$, respectively, do not attain their minimum; however, we have \[\inf H(x,y) =\lim_{x\rightarrow-\infty} H(x,y) =\lim_{y\rightarrow-\infty} H(x,y) =\eta.\] Therefore, in this case, $ \inf H(x,y)=\eta$.

Suppose $\xi> \eta+1$ which means $\gamma>1$. Whenever $x+y<2$, by virtue of the subcase $\eta< \xi\leq \eta+1$, we observe that $\inf H(x,y)=\eta$, whereas, if $x+y>2$, then the function $H$ is given by \[ H(x,y)=\frac{1}{x+y-2}(2\xi-(x+y)\eta).  \] The function $H$ attains its minimum along the edges $x=\gamma$ and $y=\gamma$. It can be verified that the functions $H(x,\gamma)$ and $H(\gamma,y)$ both attain their minimum at the point $(\gamma,\gamma)$ with the minimum value $H(\gamma,\gamma)=1/(\gamma-1)$ and on choosing the least amongst the values $\eta$ and $1/(\gamma-1)$, we infer that \[ \inf H(x,y)= \begin{cases}
\eta &\quad\text{if}\quad \eta+1< \xi\leq \eta+2\\
1/(\gamma-1) &\quad\text{otherwise}
\end{cases} \] provided $\xi> \eta+1$. In view of the observations made above in each of the subcase, by selecting the least of the corresponding infimum, it follows that \[ \inf_{x,y\leq \gamma} H(x,y)=\begin{cases}
\eta &\quad\text{if}\quad \eta\leq \xi\leq \eta+2\\
1/|1-\gamma| &\quad\text{otherwise.}
\end{cases}   \]

\textbf{Case 2: }$x\leq \gamma$ and $\gamma\leq y\leq \rho$.\\ In this case, the function $H$ becomes \[ H(x,y) =\frac{1}{|2-x-y|}(\xi+1-x\eta) =\frac{\eta}{|2-x-y|}(\rho-x).   \] It may be noted that the function $H$ either attains its minimum on the boundary or has its infimum as $x$ approaches $-\infty$.

This case may be partitioned into the subcases $\xi\leq \eta-1$, $\eta-1<\xi\leq \eta$ and $\xi>\eta$. Whenever $\xi\leq \eta-1$, it can be seen that the functions $H(x,\gamma)$ and $H(x,\rho)$, being decreasing functions of $x$, attain their minimum at the points $(\gamma,\gamma)$ and $(\gamma,\rho)$, respectively with the minimum values $1/(1-\gamma)$ and $2/(2-\gamma-\rho)$, respectively.  Also, $H(\gamma,y)\geq 1/(1-\gamma)$ for $\gamma \leq y\leq \rho$. Since $y$ can be a finite real number, the function $H$ may have its infimum as $x$ approaches $ -\infty$. Clearly, $\lim_{x\rightarrow -\infty} H(x,y)=\eta$. Since the least of the derived values with $\xi \leq \eta-1$ is $1/(1-\gamma)$,  we conclude that $ \inf H(x,y)= \min H(x,y)= H(\gamma, \gamma)=1/(1-\gamma)$.

For the case if $\eta-1 < \xi\leq \eta$, we get that $\min H(x,\gamma)=\min H(\gamma,y)=H(\gamma, \gamma)= 1/(1-\gamma)$, whereas $H(x,\rho)$ does not attain its minimum. It is easy to verify that \[\inf H(x,\rho)=\inf H(x,y)=\lim_{x\rightarrow -\infty} H(x,y)=\eta.\] Again by simple computations, we can see that $ \inf H(x,y)=1/(1-\gamma)$.

Let us assume that $\xi>\eta$ which is same as $\gamma+\rho>2$. Whenever $x+y<2$, it may be noted that $H$ does not have a minimum value, while $\inf H(x,y)=\lim_{x\rightarrow -\infty} H(x,y)=\eta$. Further, if $x+y>2$, the function $H$ attains its minimum on the boundary. The functions $H(x, \rho)$ and $H(\gamma,y)$ have the minimum values given by $H(\gamma, \rho)=2/(\gamma+\rho-2)$. In addition, if $\xi\geq \eta+1$, then we also have $\min H(x,\gamma)=H(\gamma, \gamma)=1/(1-\gamma)$.

Choosing the least of all the values so obtained, we  deduce that \[ \inf_{x\leq \gamma, \gamma\leq y\leq \rho} H(x,y)=\begin{cases}
1/(1-\gamma) &\quad\text{if}\quad \xi\leq \eta\\
\eta &\quad\text{if}\quad \eta\leq \xi\leq \eta+1\\
2/(\gamma+\rho-2) &\quad\text{if}\quad \xi\geq \eta+1.
\end{cases}   \] Working on similar lines, infimum for the function $H$ for each of the case may be noted as follows:

\textbf{Case 3: } $x\leq \gamma$ and $y\geq\rho$.\\
This case can be viewed in four subcases as given by $\xi \leq \eta-1$, $\eta-1< \xi\leq \eta$, $\eta< \xi\leq \eta+1$ and $\xi>\eta+1$. For $\xi \leq \eta-1$, we can see that the minimum of the function $H$ is attained at the point $(\gamma, \rho)$ with the minimum value $2/(2-\gamma-\rho)$, whereas if $\eta-1< \xi\leq \eta$, then \[\inf H(x,y)=\lim_{x\rightarrow -\infty} H(x,y)=\lim_{y\rightarrow \infty}=H(x,y)= \eta.\] Solving for the other two parts similarly and selecting the least value, we get \[ \inf_{x\leq \gamma, y\geq\rho} H(x,y)=\begin{cases}
\eta &\quad\text{if}\quad \eta-1\leq \xi\leq \eta+1\\
2/(|2-\gamma-\rho|) &\quad\text{otherwise}.
\end{cases}  \]

\textbf{Case 4: } $\gamma\leq x,y\leq \rho$.\\
This case may be partitioned into the subcases \emph{viz.\@} $\xi\leq \eta-1$, $\eta-1\leq \xi\leq \eta+1$ and $\xi\geq \eta+1$. If $\xi\leq \eta-1$ which indicates $\rho\leq 1$, then $\min H(x,y)=H(\gamma, \gamma)=1/(1-\gamma)$. Assuming that $\eta-1\leq \xi\leq \eta+1$ which is same as $\gamma\leq 1\leq \rho$, we have \[ \min H(x,y)=\begin{cases}
H(\gamma, \gamma)=1/(1-\gamma) &\quad\text{if}\quad\eta-1\leq \xi\leq \eta\\
H(\rho, \rho)=1/(\rho-1) &\quad\text{if}\quad \eta\leq \xi\leq \eta+1.
\end{cases}   \] In case $\xi\geq\eta+1$ which means $\gamma\geq 1$, then $\min H(x,y)=H(\rho, \rho)=1/(\rho-1)$. Hence, we conclude that \[ \min_{\gamma\leq x,y\leq \rho} H(x,y)=\begin{cases}
1/(1-\gamma) &\quad\text{if}\quad \xi\leq \eta\\
1/(\rho-1) &\quad\text{if}\quad \xi\geq \eta.
\end{cases}   \]

\textbf{Case 5: } $x\geq \rho$ and $\gamma\leq y\leq \rho$.\\
Let $\xi\leq \eta-1$ which signifies the condition $\rho\leq 1$. In this case $\min H(x,y)=H(\gamma, \rho)=2/(2-\gamma-\rho)$. Suppose that $\eta-1\leq \xi\leq \eta$ that is $\gamma+\rho\leq 2$ and $\rho\geq 1$ for which we have $\inf H(x,y)= \lim_{x\rightarrow \infty} H(x,y)=\eta$. Given that $\xi\geq \eta$ that is $\gamma+\rho\geq 2$, $\min H(x,y)=H(\rho, \rho)=1/(\rho-1)$. Consequently, we have \[ \inf_{x\geq \rho, \gamma\leq y\leq \rho} H(x,y)=\begin{cases}
2/(2-\gamma-\rho) &\quad\text{if}\quad \xi\leq \eta-1\\
\eta &\quad\text{if}\quad \eta-1\leq\xi\leq \eta\\
1/(\rho-1) &\quad\text{if}\quad \eta\leq \xi\\
\end{cases}    \]

\textbf{Case 6: } $x,y\geq \rho$. \\
Again, the case may be divided into the subcases given by $ \xi\leq \eta-1$, $\eta-1<\xi\leq \eta $ and $\xi>\eta$. On the similar lines as followed in the case when $x,y\leq \gamma$, it can be verified that \[ \inf_{x,y\geq \rho}  H(x,y)=\begin{cases}
\eta &\quad\text{if}\quad\eta-2\leq\xi\leq \eta\\
1/|\rho-1| &\quad\text{otherwise}.
\end{cases}   \]

From the above cases, we use some simple computations to select the least value of all the infimum obtained above. Therefore, it can be concluded that \[ \inf_{x,y\in\mathbb{R}} H(x,y)=\begin{cases}
1/(1-\gamma) &\quad\text{if}\quad\xi\leq \eta\\
1/(\rho-1) &\quad\text{if}\quad\xi\geq \eta\\
\end{cases}   \] and the lemma holds.
\end{proof}
\begin{lemma}\label{lemma a3}
Let $\xi\in\mathbb{R}$ and $\eta>0$. Let the function $G\colon\mathbb{R}\to [0,\infty)$ be defined as in Lemma \ref{lemma a2}. Then \[ \inf_{x,y\in\mathbb{R}} \frac{|2-y|G(x)+|x|G(y)}{|2-x-y|}=\begin{cases}
\frac{1}{1-\gamma} &\quad\text{if}\quad 1\leq \xi\leq \eta\\
\frac{1}{\rho-1} &\quad\text{if}\quad \eta\leq \xi\leq 2\eta-1\\
1 &\quad\text{otherwise,}
\end{cases}   \] where $\gamma:=(\xi-1)/\eta$ and $\rho:=(\xi+1)/\eta$.
\end{lemma}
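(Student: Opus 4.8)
The plan is to follow the pattern of the proof of Lemma~\ref{lemma a2}, while first isolating a universal lower bound that makes the third alternative immediate. I would recall from that proof the piecewise description
\[
G(x)=\begin{cases}\xi-\eta x,&x\leq\gamma,\\ 1,&\gamma\leq x\leq\rho,\\ \eta x-\xi,&x\geq\rho,\end{cases}
\]
and set $H(x,y):=\big(|2-y|\,G(x)+|x|\,G(y)\big)/|2-x-y|$ on $\mathbb{R}^2\setminus\{x+y=2\}$. Since $G\geq 1$ identically, the triangle inequality gives
\[
|2-y|\,G(x)+|x|\,G(y)\ \geq\ |2-y|+|x|\ \geq\ |(2-y)-x|\ =\ |2-x-y|,
\]
so $H\geq 1$ wherever it is defined, with equality forcing both $G(x)=G(y)=1$ (that is, $x,y\in[\gamma,\rho]$) and $(2-y)x\leq 0$. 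This settles the ``otherwise'' case: if $\xi<1$ then $\gamma<0$, and if $\xi>2\eta-1$ then $\rho>2$, so in either event the closed square $[\gamma,\rho]^2$ contains a point $(x,y)$ with $x+y\neq 2$, $G(x)=G(y)=1$ and $(2-y)x\leq 0$ --- one of the corners $(\gamma,\gamma)$, $(\rho,\rho)$ or an interior point near $x=0$ or near $y=2$, according to whether the square straddles $0$ or $2$ --- and there $H=1$, so $\inf H=1$.

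So assume $1\leq\xi\leq 2\eta-1$, which forces $\eta\geq 1$ and $0\leq\gamma\leq\rho\leq 2$. On the central square $[\gamma,\rho]^2$ one has $|x|=x$ and $|2-y|=2-y$, so $H(x,y)=(2+x-y)/|2-x-y|$; splitting by the sign of $2-x-y$ and differentiating shows that on $\{x+y<2\}$ the partials are $(4-2y)/(2-x-y)^2\geq 0$ and $2x/(2-x-y)^2\geq 0$, while on $\{x+y>2\}$ they are $(2y-4)/(x+y-2)^2\leq 0$ and $-2x/(x+y-2)^2\leq 0$. Hence $H$ is monotone in each variable on each part, and its infimum over the square is $H(\gamma,\gamma)=1/(1-\gamma)$ when $\gamma+\rho\leq 2$ and $H(\rho,\rho)=1/(\rho-1)$ when $\gamma+\rho\geq 2$, i.e.\ $1/(1-\gamma)$ if $\xi\leq\eta$ and $1/(\rho-1)$ if $\xi\geq\eta$ --- exactly the claimed value.

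It remains to check that $H$ cannot dip below this value outside the central square. Here I would proceed cell by cell over the eight remaining pieces of the decomposition obtained by putting $x$ and $y$ each into one of $(-\infty,\gamma]$, $[\gamma,\rho]$, $[\rho,\infty)$: in each cell one substitutes the appropriate affine expression for $G(x)$ and $G(y)$, subdivides further by the signs of $|2-y|$, $|x|$ and $|2-x-y|$, and on each resulting piece --- where $H$ is a ratio of affine functions --- a short derivative computation shows $H$ has no interior local minimum, so its infimum over the piece is attained either on an edge or as a coordinate tends to $\pm\infty$. The limiting values are $H\to\eta|2-y|+G(y)$ or $H\to G(x)+\eta|x|$, both of which, using $1\leq\xi\leq 2\eta-1$, are bounded below by $\min\{\xi,2\eta-\xi\}$, a quantity one verifies is at least $\min\{1/(1-\gamma),1/(\rho-1)\}$; and every edge either lies on the boundary of the central square, where by continuity of $G$ the value is at least the infimum found in the previous paragraph, or is shared with a neighbouring outer cell, so finitely many comparisons close the argument.

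The main obstacle is the bookkeeping in this last step. Unlike in Lemma~\ref{lemma a2}, the function $H$ here is not symmetric in $x$ and $y$, so the eight outer cells genuinely require separate treatment and each carries more sign sub-cases. One must also be slightly careful near the point $(0,2)$, where $H$ is of the indeterminate form $0/0$, to be sure the points exhibited with $H=1$ can always be taken off the line $x+y=2$.
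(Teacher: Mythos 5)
Your proposal follows the same global strategy as the paper -- decompose $\mathbb{R}^2$ into the nine cells determined by the position of $x$ and $y$ relative to $[\gamma,\rho]$, subdivide further by the signs of $x$, $2-y$ and $2-x-y$, and minimize cell by cell -- but you extract three simplifications the paper does not use, and they are all correct. The bound $|2-y|G(x)+|x|G(y)\ge|2-y|+|x|\ge|2-x-y|$ gives $H\ge1$ globally and, together with the exhibited corner $(\gamma,\gamma)$ (when $\gamma<0$) or $(\rho,\rho)$ (when $\rho>2$), disposes of the ``otherwise'' case in a few lines where the paper re-runs the full case analysis; the monotonicity computation on the central square is clean and, I checked, correctly yields $1/(1-\gamma)$ or $1/(\rho-1)$ according as $\gamma+\rho\le2$ or $\ge2$, i.e.\ $\xi\le\eta$ or $\xi\ge\eta$; and the uniform bound on the limits at infinity is right: $\eta|2-y|+G(y)\ge2\eta-\xi$ and $G(x)+\eta|x|\ge\xi$ in all sub-cases, and $\min\{\xi,2\eta-\xi\}=\eta-t\ge\eta/(1+t)$ for $t:=|\xi-\eta|\le\eta-1$, which is exactly the claimed infimum. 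Two small imprecisions: equality in your triangle inequality does not quite force $G(x)=G(y)=1$ (it could instead be $y=2$ or $x=0$), though you only use the sufficient direction, so nothing breaks; and the phrase ``at least $\min\{1/(1-\gamma),1/(\rho-1)\}$'' is the wrong comparison when $\gamma>1$ or $\rho<1$ (one of the two quantities is then negative) -- what you actually need, and what holds, is domination of the single claimed value.

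The genuine shortfall is that the eight outer cells, which constitute essentially the entire body of the paper's proof (its Cases 1--4 and 6--9, each with several sign sub-cases), are covered only by a recipe: substitute the affine pieces, observe that $H$ is a M\"obius function of each variable separately (the cross terms in the numerator do not always cancel, but affineness in each variable separately still holds), conclude there is no interior local minimum, and reduce to one-dimensional minimizations on the edges and to the limits at infinity. The recipe is sound, and the edges shared with the central square plus the infinity limits are already controlled by what you proved; but the edges shared between two outer cells (e.g.\ $\{y=\gamma,\ x\le\gamma\}$ or $\{x=\rho,\ y\le\gamma\}$) still require the explicit one-variable computations, and these are asserted rather than performed. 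Spot checks (e.g.\ $H(\gamma,\rho)=(\eta-1)/|\eta-\xi|$ and $H(\rho,\gamma)=(\eta+1)/|\eta-\xi|$, both $\ge\eta/(1+|\eta-\xi|)$ precisely because $|\eta-\xi|\le\eta-1$) confirm the plan closes, so this is unexecuted routine work rather than a wrong step -- but as written the proposal is a correct and somewhat streamlined blueprint of the paper's argument, not yet a complete proof.
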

\begin{proof}
Let $H$ be a function on $\mathbb{R}^2\setminus \{(x,y): x+y=2\}$ defined by \[ H(x,y):= \frac{|2-y|G(x)+|x|G(y)}{|2-x-y|}. \] and $\lim_{x+y\rightarrow 2}H(x,y)=\infty$. Being a non-negative real-valued function, $H$ has a non-negative infimum. In order to obtain the infimum of the function $H$, we consider the following cases:

\textbf{Case 1: } $x,y\leq \gamma$.\\
The function $H$ becomes \[ H(x,y)=\frac{1}{|2-x-y|}\left(|2-y|(\xi-\eta x)+|x|(\xi-\eta y)\right). \] Since the function $H$ has no critical points in the set $(-\infty,\gamma)\times(-\infty,\gamma)$, it does not acquire a minimum value in this region. Thus, the function $H$ has infimum as $x$ or $y$ approach $-\infty$ or has minimum along the edges $x=\gamma$ or $y=\gamma$.

To minimize the function $H$, this case is divided into the subcases \emph{viz.\@} $\xi\leq 1$, $1\leq \xi\leq \eta+1$, $\eta+1\leq \xi\leq 2\eta+1$ and $2\eta+1\leq \xi$. If $\xi\leq 1$ which yields $\gamma\leq 0$, the function $H$ simplifies to \[ H(x,y)=\xi-\frac{2\eta x(1-y)}{2-x-y}.  \] It can easily be verified that along the edge $y=\gamma$, the function $H(x,\gamma)$ is a decreasing function of $x$, hence attains its minimum at the point $(\gamma, \gamma)$ with the minimum value $H(\gamma,\gamma)=1$ and so does the function $H(\gamma, y)$. Also, we may note that the values $\lim_{x\rightarrow -\infty}H(x,y)=\xi-2\eta(1-y)$ and $\lim_{y\rightarrow-\infty}H(x,y)=\xi-2\eta x$ exceed $1$. This implies $ \inf H(x,y)= \min H(x,y)=1  $ whenever $\xi\leq 1$.

Now if $1\leq \xi \leq \eta+1$ which means $0\leq \gamma\leq 1$, the case can be split into parts when $x\leq 0$ and when $x\geq 0$. If $x\leq 0$, the function $H(x, \gamma)$ has its minimum value given by $H(0,\gamma)=\xi$. Besides, for a fixed value of $y$, $\lim_{x\rightarrow -\infty}H(x,y)=\xi+2\eta(1-y)$, and $\lim_{y\rightarrow -\infty}H(x,y)=\xi-2\eta x$ for a particular value of $x$. We see that $\xi+2\eta(1-y)>\xi$ and $\xi-2\eta x>\xi$ for $x\leq 0$ and $y\leq \gamma$. Further, if $x\geq 0$, then the function $H$ may be expressed as \[ H(x,y)=\frac{\xi (2-y+x)-2\eta x}{2-x-y}.  \] With simple calculations, it can be seen that the function $H(x,\gamma)$ is an increasing function of $x$ provided $\eta\leq 1$. Therefore, $\min H(x,\gamma) =H(0,\gamma) =\xi$. Whereas, the condition $\eta\geq 1$ implies that $H(x,\gamma)$ is an increasing function of $x$ whenever $\xi\geq \eta$ and is a decreasing function for $1\leq \xi\leq \eta$. Hence, we infer that  \[ \min H(x,\gamma)=\begin{cases}
H(\gamma,\gamma)=1/(1-\gamma) &\quad\text{if}\quad 1\leq \xi\leq  \eta\\
H(0,\gamma)=\xi &\quad\text{if}\quad \eta\leq  \xi \leq \eta+1.
\end{cases}  \] Computing in a similar manner along the edge $x=\gamma$, it may be noted that if $\eta\leq 1$, then we have $ \inf H(\gamma,y)= \lim\limits_{y\rightarrow -\infty}H(\gamma,y)=\xi$. If $\eta \geq 1$, then the infimum of $H(x,y)$ is given by \[ \inf  H(\gamma,y) =\begin{cases}
H(\gamma,\gamma)=1/(1-\gamma) &\quad\text{if}\quad 1\leq \xi\leq \eta\\
\lim\limits_{y\rightarrow -\infty}H(\gamma,y)=\xi &\quad\text{if}\quad \eta \leq \xi\leq \eta+1.
\end{cases}   \] In addition to this, $\lim_{y\rightarrow-\infty}H(x,y)=\xi$. Consequently, for $1\leq \xi \leq \eta+1$, simplifying the values so obtained, it can be seen that if $\eta\leq 1$, then $\inf H(x,y)=\xi$, otherwise we have \[ \inf H(x,y)= \begin{cases}
1/(1-\gamma) &\quad\text{if}\quad 1\leq \xi \leq \eta\\
\xi &\quad \text{if} \quad \eta\leq \xi\leq \eta+1.
\end{cases}   \]

The subcase when we have $\eta+1\leq \xi \leq 2\eta+1$ which results in $1\leq \gamma\leq 2$, we further divide it into the parts as $x\leq 0$, $x\geq 0$ and $x+y<2$, and $x\geq 0$ and $x+y>2$. Let $x\leq 0$. The function $H(x,\gamma)$ is increasing, and has the infimum as $x$ approaches $-\infty$. Therefore, it is easy to see that  $\inf H(x,\gamma)=\lim_{x\rightarrow -\infty} H(x,\gamma)=2\eta-\xi+2$. Since, in this case, the values $\lim_{x\rightarrow -\infty} H(x,y)=\xi+2\eta(1-y)$ and $\lim_{y\rightarrow-\infty} H(x,y)= \xi-2\eta x $ are greater than $2\eta-\xi+2$, we have $\inf H(x,y)=2\eta-\xi+2$. Suppose that $x\geq 0$ and $x+y<2$, then the function $H$ becomes \[ H(x,y)=\frac{\xi(2-y+x)-2\eta x}{2-x-y}. \] The function $H(x,\gamma)$ being an increasing function has minimum given by $H(0,\gamma)=\xi$ and for $0\leq x\leq \gamma$, we have $\lim_{y\rightarrow -\infty}H(x,y)=\xi$. Thus, infimum of the function $H(x,y)$, in this situation, is $\xi$. The part when $x\geq0$ and $x+y>2$, the minimum value of the function $H(x,y)$ occurs at the point $(\gamma,\gamma)$ with $H(\gamma,\gamma)=1/(\gamma-1)$. Choosing minimum amongst the values so obtained, for $\eta+1<\xi<2\eta+1$, we observe that if $\eta\leq 1$, then $\min H(x,y)=2\eta-\xi+2$. If $\eta\geq1$, then \[ \inf H(x,y)=\begin{cases}
2\eta-\xi+2 &\quad\text{if}\quad \eta+1\leq \xi\leq \eta+2\\
1/(\gamma-1) &\quad\text{if}\quad \eta+2\leq \xi\leq 2\eta+1.
\end{cases}  \]

The case $\xi\geq 2\eta+1$ which is equivalent to $\gamma\geq 2$ may be subdivided into six sections depending upon the signs of $2-y$, $x$ and $2-x-y$. For the part $x\leq 0$, $y\leq 2$ and $x+y<2$, the function $H$ may have its infimum as $x$ or $y$ approach $-\infty$. It can be seen that $\lim_{x\rightarrow -\infty} H(x,y)=\xi+2\eta(1-y)\geq \xi-2\eta$ and $\lim_{y\rightarrow-\infty} H(x,y)=\xi-2\eta x\geq \xi$. The case when $x\geq 0$, $y\leq 2$ and $x+y<2$, we note that for $y<2$, $H(0,y)=\xi$ and at the same time, for $0\leq x\leq \gamma$, we have $\lim_{y\rightarrow-\infty} H(x,y)=\xi$. Similarly for $x\geq 0$, $y\leq 2$ and $x+y>2$, it can be verified that $\inf H(\gamma,y)=H(\gamma,2)=\xi-2\eta$. Likewise, the minimum of the function $H(x,y)$ for the case $x\geq 0$ and $y\geq 2$ is $1$. Further, for the part $x\leq 0$, $y\geq 2$ with $x+y>2$, the function $H(x,\gamma)$ has minimum given by $H(0,\gamma)=\xi$. If $x\leq 0$, $y\geq 2$ with $x+y<2$, then $\inf H(x,y)=\xi-2\eta$. The infimum of the function $H(x,y)$ is the least of the values for infimum obtained in different cases above. In this way, we see that the $\min H(x,y)=1$ whenever $\xi>2\eta+1$.

Briefly, we observe that if $\eta\leq 1$, then  \[ \inf_{x,y\leq \gamma} H(x,y)=\begin{cases}
\xi &\quad \text{if}\quad 1 \leq \xi\leq \eta+1\\
2\eta-\xi+2 &\quad \text{if}\quad \eta+1\leq \xi\leq 2\eta+1\\
1 &\quad \text{otherwise.}
\end{cases}  \] But if $\eta\geq 1$, then \[ \inf_{x,y\leq \gamma} H(x,y)=\begin{cases}
1/(1-\gamma) &\quad \text{if}\quad 1\leq \xi\leq \eta\\
\xi &\quad \text{if}\quad \eta \leq \xi\leq \eta+1\\
2\eta-\xi+2 &\quad \text{if}\quad \eta+1\leq \xi\leq \eta+2\\
1/(\gamma-1) &\quad \text{if}\quad \eta+2\leq \xi\leq 2\eta+1\\
1 &\quad \text{otherwise.}
\end{cases}  \]

For the rest of the cases, we follow a similar trend by dividing the subcases into parts in accordance with the sign of $2-y$, $x$ and $2-x-y$. Thus, we derive the minimum of the function $H$ as follows:

\textbf{Case 2: } $x\leq \gamma$ and $\gamma\leq y\leq \rho$.\\
The case if $\eta\leq 1$, we can solve to get \[\inf_{x\leq \gamma, \gamma\leq y\leq \rho} H(x,y)=1.\] Let $1\leq \eta\leq 2$. Then \[ \inf_{x\leq \gamma, \gamma\leq y\leq \rho} H(x,y)=\begin{cases}
1/(1-\gamma) &\quad\text{if}\quad 1\leq \xi\leq \eta \\
2\eta-\xi &\quad\text{if}\quad \eta\leq  \xi\leq 2\eta-1\\
1 &\quad\text{otherwise}
\end{cases}   \] and the case when $\eta\geq 2$, we have \[ \inf_{x\leq \gamma, \gamma\leq y\leq \rho} H(x,y)=\begin{cases}
1/(1-\gamma) &\quad\text{if}\quad 1\leq \xi\leq \eta \\
2\eta-\xi &\quad\text{if}\quad \eta\leq \xi\leq \eta+1\\
(2-\rho+\gamma)/(\gamma+\rho-2) &\quad\text{if}\quad \eta+1\leq \xi\leq 2\eta-1\\
1 &\quad\text{otherwise.}
\end{cases}   \]

\textbf{Case 3: } $x\leq \gamma$ and $y\geq \rho$.\\
For $\eta\leq 1$, we have that the function $H(x,y)$ has infimum given by
\[ \inf_{x\leq \gamma,y\geq \rho} H(x,y)=\begin{cases}
2-\xi &\quad\text{if}\quad 2\eta-1\leq \xi\leq \eta\\
2+\xi-2\eta &\quad\text{if}\quad \eta\leq \xi\leq 1\\
1&\quad\text{otherwise.}
\end{cases}   \]
If we have the condition $1\leq \eta\leq 2$, then \[ \inf_{x\leq \gamma,y\geq \rho} H(x,y)=\begin{cases}
\xi &\quad\text{if}\quad 1\leq \xi\leq  \eta\\
(2-\rho+\gamma)/(\gamma+\rho-2) &\quad\text{if}\quad \eta \leq \xi\leq 2\eta-1\\
1 &\quad\text{otherwise.}
\end{cases}   \] The possibility when $\eta\geq 2$, then it may noted that \[ \inf_{x\leq \gamma,y\geq \rho} H(x,y)=\begin{cases}
(2-\rho+\gamma)/(2-\gamma-\rho) &\quad\text{if}\quad 1\leq \xi\leq \eta-1\\
\xi &\quad\text{if}\quad \eta-1\leq \xi\leq \eta\\
2\eta-\xi &\quad\text{if}\quad \eta \leq \xi\leq \eta+1\\
(2-\rho+\gamma)/(\gamma+\rho-2) &\quad\text{if}\quad \eta+1\leq \xi\leq 2\eta-1\\
1 &\quad\text{otherwise.}
\end{cases}   \]

\textbf{Case 4: } $\gamma\leq x\leq \rho$ and $y\leq \gamma$.\\
As in the cases above, we have infimum of the function $H(x,y)$ to be $\xi$ whenever $1\leq \xi\leq \eta+1$; $(2+\rho-\gamma)/(\gamma+\rho-2)$ whenever $\eta+1\leq \xi\leq 2\eta+1$ and $1$ elsewhere provided $\eta\leq 1$. In case $\eta\geq 1$, then \[ \inf_{\gamma\leq x\leq \rho, y\leq \gamma} H(x,y)=\begin{cases}
1/(1-\gamma) &\quad\text{if}\quad 1\leq \xi\leq \eta\\
\xi &\quad\text{if}\quad \eta\leq \xi\leq \eta+1\\
(2+\rho-\gamma)/(\gamma+\rho-2) &\quad\text{if}\quad \eta+1\leq \xi\leq 2\eta+1\\
1 &\quad\text{otherwise.}
\end{cases}   \]

\textbf{Case 5: } $\gamma\leq x,y \leq \rho$.\\
We compute that for the case if $\eta\leq 1$, $\inf H(x,y)=1$ as $\xi$ ranges over the real line. For the part when $\eta\geq 1$, we have \[\inf_{\gamma\leq x,y\leq \rho} H(x,y)=\begin{cases}
1/(1-\gamma) &\quad\text{if}\quad 1\leq \xi\leq \eta\\
1/(\rho-1) &\quad\text{if}\quad \eta\leq \xi\leq 2\eta-1\\
1 &\quad\text{otherwise.}
\end{cases}\]

\textbf{Case 6: } $\gamma\leq x\leq \rho$ and $y\geq \rho$.\\
Again with condition $\eta\leq 1$, we note that $\inf H(x,y)=1$ for $\xi\in\mathbb{R}$ and the case when $1\leq \eta\leq 2$, then $\inf H(x,y)=\xi$ whenever $1\leq \xi\leq \eta$ and $\inf H(x,y)=1/(\rho-1)$ provided $\eta\leq \xi\leq 2\eta-1$ and the infimum is $1$ elsewhere. On the other hand, if $\eta\geq 2$, then  \[ \inf_{\gamma\leq x \leq \rho, y\geq \rho} H(x,y)=\begin{cases}
(2-\rho+\gamma)(2-\gamma-\rho) &\quad\text{if}\quad 1\leq \xi\leq \eta-1\\
\xi &\quad\text{if}\quad \eta-1\leq \xi\leq \eta\\
1/(\rho-1) &\quad\text{if}\quad \eta\leq \xi\leq 2\eta-1\\
1 &\quad\text{otherwise.}
\end{cases}   \]

\textbf{Case 7: } $x\geq \rho$ and $y\leq \gamma$.\\
In this case, we may see that \[ \inf_{x\geq \rho, y\leq \gamma} H(x,y)=\begin{cases}
(2+\rho-\gamma)/(2-\gamma-\rho) &\quad\text{if}\quad -1\leq \xi\leq \eta-1\\
\xi+2 &\quad\text{if}\quad \eta-1\leq \xi\leq \eta\\
2\eta-\xi+2 &\quad\text{if}\quad \eta\leq \xi\leq \eta+1\\
(2+\rho-\gamma)/(\gamma+\rho-2) &\quad\text{if}\quad \eta+1\leq \xi\leq 2\eta+1\\
1 &\quad\text{otherwise.}
\end{cases}   \]

\textbf{Case 8: } $x\geq \rho$ and $\gamma\leq y\leq \rho$.\\
This case is also partitioned as $\eta\leq 1$ and $\eta\geq 1$. Let $\eta\leq 1$. Then the function $H$ has infimum given by $(2+\rho-\gamma)/(2-\gamma-\rho)$, whenever $-1\leq \xi\leq \eta-1$ and is given by $2\eta-\xi$ if $\eta-1\leq \xi\leq 2\eta-1 $ and is otherwise $1$. If $\eta\geq 1$, then it can be seen that
\[ \inf_{x\geq \rho, \gamma\leq y \leq \rho} H(x,y)=\begin{cases}
(2+\rho-\gamma)/(2-\gamma-\rho) &\quad\text{if}\quad -1\leq \xi\leq \eta-1\\
2\eta-\xi &\quad\text{if}\quad \eta-1\leq \xi\leq \eta\\
1/(\rho-1) &\quad\text{if}\quad \eta\leq \xi\leq 2\eta-1\\
1 &\quad\text{otherwise.}
\end{cases}  \]

\textbf{Case 9: } $x,y\geq \rho$.\\
For this case, if $\eta\leq 1$, then infimum of $H$ happens to be $\xi+2$ if $-1\leq \xi\leq \eta-1$; $2\eta-\xi$ if $\eta-1\leq \xi\leq 2\eta-1$ and $1$ elsewhere. For the part $\eta\geq 1$, we observe \[ \inf_{x,y\geq\rho} H(x,y)=\begin{cases}
1/(1-\rho) &\quad\text{if}\quad -1\leq \xi\leq \eta-2 \\
\xi+2 &\quad\text{if}\quad \eta-2\leq \xi\leq \eta-1 \\
2\eta-\xi  &\quad\text{if}\quad \eta-1\leq \xi\leq \eta \\
1/(\rho-1) &\quad\text{if}\quad \eta\leq \xi\leq 2\eta-1   \\
1 &\quad\text{otherwise.}
\end{cases}  \]
Drawing the conclusion by choosing least of all the infimum values obtained above, the infimum of the function $H$ is determined and is obtained to be \[ \inf_{x,y\in\mathbb{R}} H(x,y)=\begin{cases}
\frac{1}{1-\gamma} &\quad\text{if}\quad 1\leq \xi\leq \eta\\
\frac{1}{\rho-1} &\quad\text{if}\quad \eta\leq \xi\leq 2\eta-1\\
1 &\quad\text{otherwise.}
\end{cases}  \qedhere\]
\end{proof}
Using the above-mentioned lemmas we now prove the theorems stated in Section \ref{intro}.
\begin{proof}[Proof of Theorem \ref{bound theorem}]
Let $f\in\mathcal{R}_\sigma(\lambda,\varphi)$. Using Definition \ref{definition R}, we know that there exist two analytic functions $r$, $s\colon\mathbb{D}\to\mathbb{D}$ satisfying $r(0)=0=s(0)$ such that
\begin{equation}\label{schwarz relation}
(1-\lambda)\frac{f(z)}{z}+\lambda f'(z)=\varphi(r(z))\quad \text{and}\quad (1-\lambda)\frac{g(w)}{w}+\lambda g'(w)=\varphi(s(w)).
\end{equation} Define the functions $p$ and $q$ by
\begin{equation}\label{p and q}
p(z):=\frac{1+r(z)}{1-r(z)}=1+p_1 z+p_2z^2+\cdots \text{ and } q(w):=\frac{1+s(w)}{1-s(w)}=1+q_1 w+q_2w^2+\cdots.
\end{equation} It may be noted that the functions $p$ and $q$ are analytic with positive real part in $\mathbb{D}$ and $p(0)=1=q(0)$. Using equations \eqref{schwarz relation} and \eqref{p and q}, it is clear that \begin{align}
(1-\lambda)\frac{f(z)}{z}+\lambda f'(z)&=\varphi\left(\frac{p(z)-1}{p(z)+1}\right)\label{phi f}\\
\shortintertext{and}
(1-\lambda)\frac{g(w)}{w}+\lambda g'(w)&=\varphi\left(\frac{q(w)-1}{q(w)+1}\right).\label{phi g}
\end{align}  Comparing the coefficients on the both sides of equation \eqref{phi f}, we have the relations
\begin{equation}\label{a2 a3 p}
(1+\lambda)a_2=\frac{B_1p_1}{2} \quad\text{and}\quad (1+2\lambda)a_3=\frac{B_1}{2}p_2+\frac{p_1^2}{4}(B_2-B_1).
\end{equation}
Similarly, using equation \eqref{phi g}, we get
\begin{equation}\label{a2 a3 q}
(1+\lambda)a_2=-\frac{B_1q_1}{2}\quad \text{and}\quad
(1+2\lambda)(2a_2^2-a_3)=\frac{B_1}{2}q_2+\frac{q_1^2}{4}(B_2-B_1).
\end{equation}
Some simple calculations in equation \eqref{a2 a3 p} yield
\begin{equation}\label{fs p}
a_3-xa_2^2=\frac{B_1}{2(1+2\lambda)}\left(p_2-\frac{\nu}{2}p_1^2\right),
\end{equation} where $\nu:=x\frac{B_1}{\tau}-\frac{B_2}{B_1}+1$ and $\tau:=((1+\lambda)^2)/(1+2\lambda)$. From \cite[Lemma 2]{MR2055766}, we have \begin{equation}\label{inequality} |p_2-(\nu/2)p_1^2|\leq \max\{2,2|\nu-1|\} . \end{equation} By means of inequality equation \eqref{fs p} and \eqref{inequality} , we get
\begin{equation}\label{fs x}
|a_3-xa_2^2|\leq \frac{B_1}{1+2\lambda}\max \left\{1, \left|x\frac{B_1}{\tau} -\frac{B_2}{B_1}\right|\right\}.
\end{equation}
With a similar computation using relation \eqref{a2 a3 q}, it follows that
\begin{equation}\label{fs y}
|a_3-(2-y)a_2^2|\leq \frac{B_1}{1+2\lambda}\max \left\{1, \left|y\frac{B_1}{\tau} -\frac{B_2}{B_1}\right|\right\}.
\end{equation}
Using triangle's inequality with the inequalities \eqref{fs x} and \eqref{fs y}, we arrive at \[ |(2-x-y)a_2^2|\leq |a_3-xa_2^2|+|a_3-(2-y)a_2^2|\leq \frac{B_1}{1+2\lambda}(G(x)+G(y)),  \] where $G(x):=\max \left\{1, \left|x\frac{B_1}{\tau} -\frac{B_2}{B_1}\right|\right\}.$ Hence, if $x,y\in\mathbb{R}$, then we have \[ |a_2|^2\leq \frac{B_1}{1+2\lambda}\inf_{x,y\in\mathbb{R}} \frac{G(x)+G(y)}{|2-x-y|}.  \] Since $B_2\in\mathbb{R}$, in view of Lemma \ref{lemma a2} by taking $\eta:=B_1/\tau$ and $\xi:=B_2/B_1$ which leads to $\gamma=\frac{\tau}{B_1}\left(\frac{B_2}{B_1}-1\right)$ and $\rho=\frac{\tau}{B_1}\left(\frac{B_2}{B_1}+1\right)$, we have that $|a_2|$ is bounded by \[ |a_2|\leq \sqrt{\frac{B_1}{1+2\lambda}} \begin{cases}
\frac{B_1}{\sqrt{B_1^2-\tau B_2+\tau B_1}}&\quad\text{if}\quad \frac{B_2}{B_1}\leq \frac{B_1}{\tau}\\
\frac{B_1}{\sqrt{\tau B_2+\tau B_1-B_1^2}}&\quad\text{if}\quad \frac{B_2}{B_1}\geq \frac{B_1}{\tau}.\\
\end{cases}  \] In order to obtain estimate for the coefficient $a_3$, we again apply triangle's inequality to the relations \eqref{fs x} and \eqref{fs y} and infer \[ |a_3| \leq \frac{B_1}{1+2\lambda} \inf_{x,y\in\mathbb{R}} \frac{|2-y|G(x)+|x|G(y)}{|2-x-y|}. \] By Lemma \ref{lemma a3} with $\eta:=B_1/\tau$ and $\xi:=B_2/B_1$, we conclude \[ |a_3|\leq \frac{B_1}{1+2\lambda}\begin{cases}
\frac{B_1^2}{B_1^2-\tau B_2+\tau B_1} &\quad\text{if}\quad 1\leq \frac{B_2}{B_1} \leq \frac{B_1}{\tau}\\
\frac{B_1^2}{\tau B_2+\tau B_1-B_1^2} &\quad\text{if}\quad \frac{B_1}{\tau}\leq \frac{B_2}{B_1}\leq \frac{2B_1}{\tau}-1\\
1 &\quad\text{otherwise}
\end{cases}   \] which completes the proof of the theorem.
\end{proof}
\begin{illustration}\label{illustration R}
Let $\varphi(z)=(1+z)/(1-z)$ and $\lambda=1$.  For $\nu\geq \sqrt{2}$, the function $f_\nu(z):=\nu z/(\nu-z)\in\mathcal{R}_\sigma(1,\varphi)$. The assertion can be justified as follows:\\
The inverse of the function $f_\nu$, denoted by $g_\nu$, is given by $g_\nu(w)=\nu w/(\nu+w)$. Given that $\nu\geq\sqrt{2}$, we can see that $f_\nu$ is univalent and has a univalent inverse in $\mathbb{D}$. For $f_\nu\in\mathcal{R}_\sigma(1,(1+z)/(1-z))$, it is required that for $z,w\in\mathbb{D}$, the subordinations \[ f'_\nu(z) =\frac{\nu^2}{(\nu-z)^2} \prec \frac{1+z}{1-z} \quad\text{and}\quad g'_\nu(w) =\frac{\nu^2}{(\nu+w)^2} \prec \frac{1+w}{1-w}   \] hold. It may be noted that the function $f'_\nu(z)$ maps unit disk $\mathbb{D}$ onto the domain \[ \left|\sqrt{z} -\frac{\nu^2}{\nu^2-1} \right| <\frac{\nu}{\nu^2-1}\] which lies in the right-half plane if and only if $\nu\geq\sqrt{2}$. Similarly, $g'_\nu(w)$ maps $\mathbb{D}$ onto a domain which is contained in the right-half plane provided $\nu\geq\sqrt{2}$. Thus, we have $f_\nu\in\mathcal{R}_\sigma(1,(1+z)/(1-z))$. Moreover, as an application of Theorem \ref{bound theorem}, we must have $1/|\nu|<\sqrt{2/3}$ which is true as $\nu\geq\sqrt{2}$.

Furthermore, the function $f_\nu(z)/z$ maps the unit disk onto the disk \begin{equation}\label{disk}
\left|z -\frac{\nu^2}{\nu^2-1} \right| <\frac{\nu}{\nu^2-1}
\end{equation} which is contained in the right half plane provided $\nu\geq 1$. Similar is the case for the mapping $g_\nu(w)/w$. Therefore, for $\nu\geq 1$ and $z,w\in\mathbb{D}$,  the subordinations \[ \frac{f_\nu(z)}{z}= \frac{\nu}{\nu-z}\prec \frac{1+z}{1-z}\quad \text{and} \quad \frac{g_\nu(w)}{w}=\frac{\nu}{\nu +w}\prec \frac{1+w}{1-w} \] hold. Hence, the function $f_\nu\in \mathcal{R}_\sigma(0,(1+z)/(1-z))$. Theorem \ref{bound theorem} readily yields $1/|\nu|\leq \sqrt{2}$ which clearly is true because of the assumption $\nu\geq 1$.

Now for $\varphi(z)=\sqrt{1+z}$ and $\lambda =0$, the function $f_\nu \in \mathcal{R}_\sigma( 0,\sqrt{1+z})$ if the image of the unit disk under the mappings $f_\nu(z)/z$ and $g_\nu(w)/w$ lie in the region bounded by the right of lemniscate of Bernoulli given by $\{ w:|w^2-1|=1 \}$. By means of \cite[Lemma 2.2]{MR2879136}, it may be noted that if $\nu\geq \sqrt{2}(\sqrt{2}+1)$, then the disk \eqref{disk} is contained in the set $\{w:|w^2-1|<1\}$. Thus, we infer that if $\nu\geq \sqrt{2}(\sqrt{2}+1)$, then $f_\nu\in \mathcal{R}_\sigma(0,\sqrt{1+z})$. Further, using Theorem \ref{bound theorem}, it is easy to compute that $1/|\nu|\leq 1/\sqrt{7}$ which is true as $\nu\geq \sqrt{2}(\sqrt{2}+1)$.
\end{illustration}
Based on the proof of Lemma \ref{lemma a3}, we have the following lemma.
\begin{lemma}\label{lemma starlike}
Let $\xi\in\mathbb{R}$ and $\eta>0$. Let the function $G\colon \mathbb{R}\to[0,\infty)$ be defined as in Lemma \ref{lemma a2}. Then \[ \inf_{x,y\in\mathbb{R}} \frac{|3-y|G(x)+|x+1|G(y)}{|2-x-y|}=\begin{cases}
\frac{2}{1-\gamma} &\quad\text{if}\quad 1-\eta \leq \xi\leq \eta \\
\frac{2}{\rho-1} &\quad\text{if}\quad \eta\leq \xi\leq 3\eta-1\\
1 &\quad\text{otherwise,}
\end{cases}   \]
where $\gamma:=(\xi-1)/\eta$ and $\rho:=(\xi+1)/\eta$.
\end{lemma}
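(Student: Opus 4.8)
The plan is to mirror the proof of Lemma \ref{lemma a3}. Put
\[ H(x,y):=\frac{|3-y|G(x)+|x+1|G(y)}{|2-x-y|} \]
on $\mathbb{R}^2\setminus\{x+y=2\}$, where, as there, $G$ is piecewise affine with breakpoints $\gamma<\rho$: $G(x)=\xi-\eta x$ for $x\le\gamma$, $G(x)=1$ for $\gamma\le x\le\rho$, and $G(x)=\eta x-\xi$ for $x\ge\rho$. Since $H\ge0$ its infimum exists and is non-negative; along $x+y\to2$ the denominator vanishes while the numerator, which on the line $x+y=2$ equals $|x+1|\bigl(G(x)+G(2-x)\bigr)$, does not, so $H\to\infty$ — except near the single point $(x,y)=(-1,3)$, where the numerator also vanishes. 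There a short directional estimate (writing $x=-1+s$, $y=3+t$, one gets $H\to(|t|G(-1)+|s|G(3))/|s+t|$ as $s,t\to0$ with $s+t\ne0$, a quantity always at least $\min\{G(-1),G(3)\}$) shows the local infimum is $\min\{G(-1),G(3)\}\ge1$, so this point never undercuts the claimed value.

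Next, partition $\mathbb{R}^2$ into the nine rectangles obtained by placing each of $x$, $y$ in one of $(-\infty,\gamma]$, $[\gamma,\rho]$, $[\rho,\infty)$, exactly as in Lemma \ref{lemma a3} (there is no symmetry between the variables to reduce the count). On each rectangle $G(x)$ and $G(y)$ are affine; I would further split it along the lines $x=-1$, $y=3$, $x+y=2$ where the three moduli change sign, so that on each resulting piece $H$ is an explicit rational function of $(x,y)$. Such a piece has no interior critical point, so its infimum is attained on an edge $x=\gamma$, $x=\rho$, $y=\gamma$ or $y=\rho$, or in a limit as $x\to\pm\infty$ or $y\to\pm\infty$; each edge reduces to a one-variable monotonicity-and-limit computation. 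The only candidates that survive are the corners $(\gamma,\gamma)$ and $(\rho,\rho)$: using $|3-\gamma|+|\gamma+1|=4$ for $-1\le\gamma\le3$ gives $H(\gamma,\gamma)=2/|1-\gamma|$ and similarly $H(\rho,\rho)=2/|\rho-1|$, which are the branches $2/(1-\gamma)$ (effective once $\gamma<1$, i.e. $\xi<\eta+1$) and $2/(\rho-1)$; and the value $1$ comes from the flat rectangle $\gamma\le x,y\le\rho$, on which $H=\bigl(|3-y|+|x+1|\bigr)/|2-x-y|\ge1$ because $|3-y|+|x+1|\ge|(3-y)-(x+1)|=|2-x-y|$, with equality reached inside the rectangle exactly when $3\in[\gamma,\rho]$ (i.e. $\xi\ge3\eta-1$) or $-1\in[\gamma,\rho]$ (i.e. $\xi\le1-\eta$), and otherwise the infimum there is attained at a corner $(\gamma,\gamma)$ or $(\rho,\rho)$, consistent with the first two branches.

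As in Lemma \ref{lemma a3}, the bookkeeping splits further on the size of $\eta$ — the regimes $\eta\le1$, $1\le\eta\le2$, $\eta\ge2$ arise from comparing $\gamma$, $\rho$, $-1$, $3$ against the $\xi$-thresholds $1-\eta$, $\eta$, $2\eta-1$, $3\eta-1$ — and each of the nine rectangles produces a short table of infima built from $2/(1-\gamma)$, $2/(\rho-1)$, expressions of the form $\xi+c$ and $2\eta-\xi+c$ with small integers $c$, ratios such as $(2-\rho+\gamma)/(\gamma+\rho-2)$, and $1$. The final step is to take the minimum over all nine tables and simplify; the comparisons are of exactly the elementary type used in Remark \ref{remark R}, and they collapse to the three-line formula stated, the two middle branches being non-vacuous precisely when $\eta\ge1/2$ and every branch reducing to $1$ otherwise.

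The main obstacle is purely organizational: the number of sub-sub-cases (nine rectangles, each cut by three lines and by the regime of $\eta$) is large, one must keep track of which candidate point actually lies in which region, and the final comparison of all collected infima has to be carried out without slips. The one genuinely non-routine point is the behaviour near $(-1,3)$, where $H$ is a $0/0$ indeterminacy rather than blowing up; this is disposed of once by the directional estimate above, which shows the local infimum is $\min\{G(-1),G(3)\}\ge1$ and hence irrelevant to the final value.
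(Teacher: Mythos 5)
Your proposal follows exactly the strategy the paper intends -- the paper gives no proof of this lemma beyond the remark that it follows the case analysis of Lemma \ref{lemma a3}, and your outline (the nine rectangles, further splitting along $x=-1$, $y=3$, $x+y=2$, the corner values $H(\gamma,\gamma)=2/|1-\gamma|$ and $H(\rho,\rho)=2/|\rho-1|$, and the central rectangle producing the value $1$) correctly identifies where each branch of the stated formula comes from; your treatment of the $0/0$ point $(-1,3)$ is a genuine addition that the paper omits entirely. The only slips are cosmetic: the equality condition on the central rectangle is $(3-y)(x+1)\le 0$, so the relevant thresholds are $\gamma\le-1$ and $\rho\ge 3$ rather than $-1\in[\gamma,\rho]$ and $3\in[\gamma,\rho]$ (the $\xi$-inequalities you attach in parentheses are nonetheless the correct ones), and the absence of interior critical points on each affine piece is asserted rather than verified, exactly as in the paper's own proof of Lemma \ref{lemma a3}.
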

\begin{proof}[Proof of Theorem \ref{bound theorem starlike}]
Since the function $f\in\mathcal{S}^*_\sigma(\varphi)$, the Definition \ref{defn bistarlike} states that there exist two Schwarz functions $r$ and $s$ such that
\begin{equation}\label{starlike schwarz relation}
\frac{zf'(z)}{f(z)}=\varphi(r(z)) \quad \text{and}\quad \frac{wg'(w)}{g(w)}=\varphi(s(w)).
\end{equation}
Let the functions $p$ and $q$ be defined by equation \eqref{p and q}. Clearly, the functions $p$ and $q$ are analytic functions in $\mathbb{D}$ with positive real part and $p(0)=1=q(0)$. Therefore, equation \eqref{starlike schwarz relation} and \eqref{p and q} yield
\begin{equation}\label{starlike phi} \frac{zf'(z)}{f(z)}= \varphi\left(\frac{p(z)-1}{p(z)+1}\right) \quad \text{and}\quad \frac{wg'(w)}{g(w)} =\varphi\left(\frac{q(w)-1}{q(w)+1}\right).  \end{equation} Comparing the coefficients on each side of the above two relations, we get
\begin{align*}
a_2=\frac{B_1p_1}{2}, \quad & 2a_3-a_2^2=\frac{B_2p_1^2}{4} +\frac{B_1}{2}\left(p_2-\frac{p_1^2}{2}\right),\\
a_2=-\frac{B_1q_1}{2}, \quad &\text{and}\quad  3a_2^2-2a_3=\frac{B_2q_1^2}{4} +\frac{B_1}{2}\left(q_2-\frac{q_1^2}{2}\right).
\end{align*} A similar computations as that in proof of Theorem \ref{bound theorem} leads to the following inequalities: \begin{equation}\label{ineq starlike}
|2a_3-(x+1)a_2^2|\leq B_1 G(x)\quad\text{and}\quad |2a_3-(3-y)a_2^2|\leq B_1G(y),
\end{equation} where $G(x):=\max\{1,|xB_1-B_2/B_1|\}.$ On computing using triangle's inequality, it is easy to see that \[ |(2-x-y)a_2^2|\leq |a_3-xa_2^2|+|a_3-(2-y)a_2^2|\leq B_1 (G(x)+G(y))  \] which implies  \[ |a_2|^2\leq B_1 \inf_{x,y\in\mathbb{R}}\frac{G(x)+G(y)}{|2-x-y|}.  \] Since $B_2\in\mathbb{R}$, upon taking $\xi=B_2/B_1$ and $\eta=B_1$, Lemma \ref{lemma a2} gives \[ |a_2|\leq \sqrt{\frac{B_1}{1-\gamma}} \quad \left(\text{if}\quad \frac{B_2}{B_1}\leq B_1\right)\quad \text{and}\quad \sqrt{\frac{B_1}{\rho-1}}\quad \left(\text{if}\quad \frac{B_2}{B_1}\geq B_1, \right)  \] where $\gamma=\frac{1}{B_1}\left(\frac{B_2}{B_1}-1\right)$ and $\rho=\frac{1}{B_1}\left(\frac{B_2}{B_1}+1\right)$. Besides, keeping in view the relation \eqref{ineq starlike}, we may solve to get \[ |a_3|\leq \frac{B_1}{2}\inf_{x,y\in\mathbb{R}} \frac{|3-y|G(x)+|x+1|G(y)}{|2-x-y|}.  \] By means of Lemma \ref{lemma starlike} with $\xi=B_2/B_1$ and $\eta=B_1$ again, on simplifying the above relations, we get the desired estimates for the second and third coefficient of a function in class $\mathcal{S}^*_\sigma(\varphi)$.
\end{proof}
\begin{illustration}
Let $\varphi(z)=(1+z)/(1-z)$. For $\nu\geq1$, the function $f_\nu(z):=\nu z/ (\nu-z) \in \mathcal{S}^*_\sigma((1+z)/(1-z))$. The function $f_\nu$ and its inverse, denoted by $g_\nu$, are univalent in $\mathbb{D}$ for $\nu\geq 1$. For $f_\nu\in\mathcal{S}^*_\sigma((1+z)/(1-z))$, the following subordinations must hold: \[ \frac{zf'_\nu(z)}{f_\nu(z)}=\frac{\nu}{\nu-z} \prec \frac{1+z}{1-z} \quad \text{and} \quad \frac{wg'_\nu(w)}{g_\nu(w)} =\frac{\nu}{\nu+w} \prec \frac{1+w}{1-w}.  \] As in Illustration \ref{illustration R}, the functions $zf'_\nu(z)/f_\nu(z)$ and $wg'_\nu(w)/g_\nu(w)$ map the unit disk onto the region contained in the right-half plane if and only if $\nu\geq1$. Hence, $f_\nu\in \mathcal{S}^*_\sigma ((1+z)/(1-z))$ for $\nu \geq 1$. Further, according to Theorem \ref{bound theorem starlike}, it is required that $1/|\nu|<\sqrt{2}$ which is true as $\nu\geq1$.

Assuming $\nu\geq \sqrt{2}(\sqrt{2}+1)$, using \cite[Lemma 2.2]{MR2879136}, we can see that the mappings $zf'_\nu(z)/f_\nu(z)$ and $wg'_\nu(w)/g_\nu(w)$ map the unit disk onto the disks that are contained in the region $\{ w:|w^2-1|<1  \}$. Hence, the function $f_\nu\in \mathcal{S}^*_\sigma(\sqrt{1+z})$. In this case, Theorem \ref{bound theorem starlike} implies that $1/|\nu|\leq 1/\sqrt{7}$ which is true as $\nu\geq \sqrt{2}(\sqrt{2}+1)$.
\end{illustration}
\begin{remark}
It may be noted that with $\varphi(z)=(1+z)/(1-z)$, whenever $ \nu\geq 1$, the function $f_\nu:=\nu z/ (\nu-z)\in \mathcal{S}^*_\sigma(\varphi) $ and $f_\nu \in \mathcal{R}_\sigma (0,\varphi)$ but for $1\leq \nu<\sqrt{2}$, $f_\nu \not \in \mathcal{R}_\sigma (1,\varphi)$.
\end{remark}

\end{document}